\crefname{theorem}{Theorem}{Theorems}
\crefname{corollary}{Corollary}{Corollaries}
\crefname{lemma}{Lemma}{Lemmas}
\crefname{proposition}{Proposition}{Propositions}
\crefname{definition}{Definition}{Definitions}
\crefname{remark}{Remark}{Remarks}
\crefname{example}{Example}{Examples}
\crefname{claim}{Claim}{Claims}
\crefname{sublemma}{Sublemma}{Sublemmas}
\crefname{conjecture}{Conjecture}{Conjectures}
\crefname{assumption}{Assumption}{Assumptions}
\crefname{question}{Question}{Questions}
\crefname{problem}{Problem}{Problems}
\crefname{condition}{Condition}{Conditions}
\crefname{exercise}{Exercise}{Exercises}
\crefname{fact}{Fact}{Facts}
\crefname{notation}{Notation}{Notations}
\crefname{construction}{Construction}{Constructions}
\crefname{recap}{Recap}{Recaps}
\crefname{step}{Step}{Steps}
\crefname{definition-lemma}{Definition-Lemma}{Definition-Lemmas}
\crefname{definition-proposition}{Definition-Proposition}{Definition-Propositions}
\crefname{summary}{Summary}{Summaries}
\crefname{section}{Section}{Sections}
\crefname{table}{Table}{Tables}
\crefname{figure}{Figure}{Figures}
\newcommand{\an}{\mathrm{an}}
\newcommand{\MM}{\scMbar_{g,n}}
\DeclareMathOperator{\obj}{obj}
\newtheorem*{notationconvention}{Notation and conventions}
\newaliascnt{definition-proposition}{theorem}
\newtheorem{definition-proposition}[definition-proposition]{Definition-Proposition}
\title[Noncommutative rigidity]{Noncommutative rigidity of the moduli stack of
stable pointed curves}
\author[S. Okawa]{Shinnosuke  Okawa}
\address{Department of Mathematics,
Graduate School of Science, Osaka University
\newline 1-1, Machikaneyamacho, Toyonaka, Osaka 560-0043, Japan}
\email{okawa@math.sci.osaka-u.ac.jp}
\author[T. Sano]{Taro Sano}
\address{Department of Mathematics,
Graduate School of Science, Kobe university
\newline 1-1, Rokkodai, Nada-ku, Kobe, 657-0001, Japan}
\email{tarosano@math.kobe-u.ac.jp}
\begin{document}
\maketitle

\begin{abstract}
We prove that the second Hochschild cohomology group of the moduli stack of stable $n$-pointed genus $g$ curves
vanishes for all but finitely many \( ( g, n ) \).

\end{abstract}

\tableofcontents

\section{Introduction}
\label{section:introduction}

As an attempt to formulate a non-abelian generalization of the fact that the
\(
   i
\)-th cohomology group vanishes on a space of dimension
\(
   d
\)
for
\(
   i > d
\), Kapranov proposed his geometric syzygy principle in \cite{kapranov} (see also \cite[Section 1.3]{MR3165021} and \cite[Introduction]{MR2460695}): starting with a geometric object
\(
   X _{ 0 }
\)
related to an algebraic variety of dimension
\(
   d
\),
and inductively defining the space
\(
	X _{ i + 1 }
\)
as the moduli space of deformations of
\(
   X _{ i }
\),
the
\(
   d
\)-th step
\(
   X _{ d }
\)
should be rigid. The case
\(
   d = 1
\)
is settled in \cite[Theorem~2.1]{MR2460695}; i.e., the first cohomology group of the tangent bundle of the moduli stack
\(
   \scMbar _{ g, n }
\)
of stable pointed curves vanishes for all \( ( g, n ) \) (in characteristic \( 0 \). See \cite{MR3658203} for the case \( g = 0 \) in positive characteristics).

Based on this, Manin and Smirnov asked in \cite[Section 0.1 (B)]{MR3098789} whether \( \cMbar _{ g, n } \) admits nontrivial \emph{noncommutative} deformations. The question was motivated by the observation that unusual deformations of canonical and rigid objects tend to be interesting, such as \(q\)-deformation and quantum groups. The goal of this paper is to give a somewhat negative answer to this question by showing that \( \cMbar _{ g, n } \) is noncommutatively rigid for all but finitely many \( ( g, n ) \).

By noncommutative deformations we mean the flat deformations of the category
\(
    \Qcoh \scMbar_{g, n}
\)
of quasi-coherent sheaves on
\(
    \scMbar_{g, n}
\)
as an abelian category.
The notion of flat deformation of abelian categories, which naturally encompasses that of schemes, is defined in \cite{MR2344349}.
It is shown in \cite[Theorem~3.1]{MR2183254} that deformations of an abelian category are controlled by its \emph{Hochschild cohomology}, in such a way that the second Hochschild cohomology group serves as the tangent space. For a smooth proper Deligne--Mumford stack
$
X
$
over a field
$
\bfk
$
the
\(
   i
\)-th Hochschild cohomology group of the abelian category
\(
   \Qcoh X
\)
is computed as follows, where
\(
   \Delta
   \colon
   X \to
   X \times _{ \bfk } X
\)
is the diagonal morphism of
\(
   X
\)
relative to
\(
   \Spec \bfk
\):
\begin{equation}
\label{eq:Hochschild_cohomology_for_smooth_proper_DM_stacks}
\HH^i ( X )
=\HH^i_{\bfk} ( X )
\coloneqq
\HH^i_{\bfk} ( \Qcoh X )
\stackrel{\textrm{\cref{theorem:derived_morita_for_dm_stack}}}{\simeq}
\Ext^i_{ X \times_{\bfk} X } ( \Delta _{ * } \cO _{ X } , \Delta _{ * } \cO _{ X } )
\end{equation}
The main results of this paper are summarized as follows.

\begin{theorem}[Main Theorem]\label{theorem:main theorem of the paper}
Suppose that the base field
$
 \bfk
$
is of characteristic \(0\). Then
\begin{enumerate}
\item
\(
	\HH ^{ 2 } ( \Mbar _{ 0, n } )
	=
	0
\)
holds for all
\(
   n
\)
but
\(
   5
\), where
\(
	\dim _{ \bfk } \HH ^{ 2 } ( \Mbar _{ 0, 5 } )
	=
	6
\).

\item
\(
	\HH ^{ 2 } ( \cMbar _{ g, n } )
	=
	0
\)
holds for all
\(
	( g, n )
\)
with
\(
	g \ge 1
\),
except possibly for the following 8 cases: 
\begin{align}\label{eq:remaining_cases}
 ( g, n ) = ( 4, 0 ),
 ( 3, 1 ), ( 3, 0 ),
 ( 2, 2 ), ( 2, 1 ), ( 2 , 0 ),
 ( 1, 3 ), ( 1, 2 )
\end{align}
\end{enumerate}
\end{theorem}

\begin{remark}
\begin{enumerate}[(1)]
    \item 
    \cref{theorem:main theorem of the paper} implies that
\(
   \scMbar _{ g, n }
\)
is noncommutatively rigid for almost all
\( (g, n) \), with at most 9 exceptions. Conceptual reasons for this phenomenon, as well as the relationship to the fact that all noncommutative deformations of curves are actually commutative, are not clear to the authors.

\item
The exceptions~\eqref{eq:remaining_cases} appear for technical reasons, and it is not clear to the authors whether they are essential or not. The remaining obstructions for the noncommutative rigidity in those cases are explicitly described in \cref{corollary:descriptions_of_the_remaining_cases}, as certain cohomology groups on the twisted sectors of the inertia stack of
\(
    \cMbar _{ g, n }
\).

\item
Since
\(
	\Mbar _{ 0, 5 }
\)
is the del Pezzo surface of degree \(5\), one obtains a \(6\)-dimensional global family of noncommutative deformations as blowups of noncommutative projective planes in four points in the sense of \cite{MR1846352} or as certain Artin--Schelter regular algebras whose definition depends on a choice of a helix of the derived category of coherent sheaves of
\(
   \Mbar _{ 0 , 5 }
\) (\cite{2020arXiv200707620O}).
A modular interpretation of these noncommutative deformations in terms of the notion of pointed stable curves, especially the reason why the noncommutative rigidity fails in the case
\(
    ( g, n ) =
   ( 0, 5 )
\), remains open.
\end{enumerate}
\end{remark}

Let us explain the main ideas and methods of the proof of \cref{theorem:main theorem of the paper}. The first ingredient is the Hochschild-Kostant-Rosenberg (HKR) isomorphism, which allows us to compute the Hochschild cohomology groups in terms of sheaf cohomology groups. It was not known for Deligne--Mumford stacks when this paper first appeared on the arXiv, but fortunately it is now established as \cite[Corollary~4.16]{2025arXiv250900501F}, which we recall in \cref{theorem:HKR_for_DM_stack} for the convenience of the reader.
Applying \cref{theorem:HKR_for_DM_stack} for
\(
   X = \cMbar _{ g, n }
\)
and
\(
   i = 2
\)
we get the following decomposition:
\begin{equation}\label{eq:Intro_HKR_for_DM_stack}
	\HH ^2 ( \cMbar _{ g, n } )
	\simeq
	\bigoplus _{ W \subset I \cMbar _{ g, n } }
	\bigoplus _{ p + q = 2 }
	H ^{ q - c _W } \lb W, \wedge ^{ p } \Theta _{ W / \bfk } \otimes \wedge ^{ c_W } \cN _{ W / \cMbar _{ g, n } }\rb
\end{equation}
In \eqref{eq:Intro_HKR_for_DM_stack}, \( W \) runs through the connected components of the inertia stack \( I \cMbar _{ g, n } \). Note that \( I \cMbar _{ g, n } \) is a union of the main component \( \cMbar _{ g, n } \subset I \cMbar _{ g, n } \) and  the twisted sectors (cf. \cref{definition-lemma:inertia_stack_basic_facts}).
The symbols \( c _{ W } \) and \( \cN _{ W / \cMbar _{ g, n } } \) denote the codimension and the normal bundle of the natural morphism \( W \to \cMbar _{ g, n } \), respectively.
The sheaf
$
\Theta_{W/\bfk}
=
 \lb \Omega ^{ 1 } _{ W / \bfk } \rb ^{ \vee }
$ 
is the locally free sheaf of algebraic vector fields on
$
W
$
over
$\bfk$.

In order to show the vanishing of \( \HH ^{ 2 } ( \cMbar _{ g, n } ) \) using~\eqref{eq:Intro_HKR_for_DM_stack}, in~\cref{section:The_classification_of_twisted_sectors_of_codimension_at_most_2} we classify the connected components \( W \subset I \cMbar _{ g, n } \) with \( c _{ W } = 0, 1, 2 \) for all \( ( g, n ) \).

In~\cite{MR2460695} Hacking used a vanishing theorem to show the vanishing of
\(
   H ^{ 1 }
   \left(
    \scMbar _{ g, n },
    \Theta
    _{
        \scMbar _{ g, n }        
    }
   \right)
\),
together with the inductive structure of the boundary strata of
\(
   \scMbar _{ g, n }
\)
and the positivity of standard line bundles. Our proof of the vanishing of cohomology groups in the right hand side of \eqref{eq:Intro_HKR_for_DM_stack} is inspired by Hacking's method, but it differs in several points.
The most notable one is the difference in vanishing theorems used in the proof. In \cite{MR2460695}, a Kodaira vanishing theorem on Deligne--Mumford stacks \cite[Theorem A.1]{MR2460695} is established and used. Since this is not enough for our purpose, we establish the logarithmic Kodaira--Akizuki--Nakano vanishing theorem for normal crossing pairs of Deligne--Mumford stacks:

\begin{theorem}[{\(=\)\cref{theorem:logarithmic_KAN} in characteristic \(0\)}]\label{theorem:Introduction logarithmic_KAN}
	Let
	$
	\cX
	$
	be a smooth proper Deligne--Mumford stack of dimension
	$d$
	over a field $\bfk$ of characteristic \(0\),
	$
	\cD
	$
	a normal crossing divisor on
	$
	\cX
	$,
	and
	$\cL$ an ample line bundle on
	$
	\cX
	$. 
	Then we have
	\begin{equation}
	 H^i(\cX, \Omega^j_\cX (\log \cD)(-\cD) \otimes \cL) =0 
	\end{equation}
	if $i +j > d$.
\end{theorem}
The proof of \cref{theorem:logarithmic_KAN} is done by generalizing the mod \( p \) argument established in \cite{MR894379} for schemes, after \cite{MR3060750}. We believe that \cref{theorem:logarithmic_KAN} should be of independent interest and useful further applications.

After these preparations we show the following theorem, which asserts that there is no contribution from the main component \( \cMbar _{ g, n } \subset I \cMbar _{ g, n } \) in the right hand side of \eqref{eq:Intro_HKR_for_DM_stack} except for the case \( ( g, n ) = ( 0, 5 ) \).
\begin{theorem}\label{theorem:vanishing_of_the_untwisted_sectors}
Suppose that
$
 \bfk
$
is of characteristic \(0\). Then
\begin{enumerate}
\item
$
 H ^{ 2 } \lb \scMbar_{g, n}, \cO _{ \scMbar_{g, n} } \rb = 0
$
holds for any
$
 ( g, n )
$.

\item
\(
	\dim _{ \bfk }
	H ^{ 0 } \lb \scMbar_{g, n}, \wedge ^{ 2 } \Theta _{ \scMbar_{g, n} } \rb
	=
\begin{cases}
	0 & ( g, n ) \neq ( 0, 5 )\\
	6 & ( g, n ) = ( 0, 5 ).
\end{cases}
\)
\end{enumerate}
\end{theorem}
Since
$
 \scMbar _{ g, n }
$
is a scheme if and only if $g = 0$, in which case there are no twisted sectors,
the Hacking's rigidity theorem and \cref{theorem:vanishing_of_the_untwisted_sectors} immediately imply the first assertion of \cref{theorem:main theorem of the paper}.

The first assertion of
\cref{theorem:vanishing_of_the_untwisted_sectors} is shown in \cref{theorem:vanishing_of_O}.
In fact it also follows from
\cite[Theorem 2.2]{MR1733327} plus the Hodge decomposition
for smooth proper Deligne--Mumford stacks (\cref{lemma:log_hodge_decomposition}).
Our proof can be seen as a shortcut version. 

The second assertion of
\cref{theorem:vanishing_of_the_untwisted_sectors} is shown in \cref{section:proof_of_no_bivector_on_Mgnbar} as \cref{theorem:no_bivector_on_Mgnbar} for the case
\(
   ( g, n )
   \neq
   ( 0, 5 ),
   ( 1, 2 )
\). The cases
\(
   ( g, n )
   =
   ( 0, 5 ),
    ( 1, 2 )
\)
are settled in~\cref{section:exceptional_cases}
as \cref{lemma:bivectors_on_M05} and \cref{proposition:bivectors_on_M12}, respectively.
The proof of
\cref{theorem:no_bivector_on_Mgnbar}
is based on the inductive structure of the moduli stack
$
\scMbar_{g,n}
$
(\cref{lemma:inductive_structure_of_the_boundary}),
ampleness of the log canonical bundle
$
\omega_{\scMbar_{g,n}}(\cB)
$
(\cref{proposition:ampleness_of_the_log_canonical_divisor}),
and the positivity of the $\psi$-classes
(\cref{lemma:psi_classes_are_nef_and_big}),
to which we apply \cref{theorem:Introduction logarithmic_KAN}.

To settle the cases
\(
   g \ge 1
\), we need to show the vanishing of the contributions from the twisted sectors in the right hand side of \eqref{eq:Intro_HKR_for_DM_stack}.
In this paper we prove it except possibly for the cases
$
 g + n \le 4
$
and
$
 ( g, n ) \neq ( 1, 1 )
$;
these are the 8 cases in \eqref{eq:remaining_cases}:

\begin{theorem}\label{theorem:introduction_vanishing}
Suppose that
$
 \bfk
$
is of characteristic \(0\). Then the right hand side of \eqref{eq:Intro_HKR_for_DM_stack}
vanishes for any \( ( g, n ) \) with \( g \ge 1\), except possibly for the 8 cases in
\eqref{eq:remaining_cases}.
\end{theorem}

The proof of \cref{theorem:introduction_vanishing} is quite involved, and divided into two steps. The first step is in \cref{section:The_classification_of_twisted_sectors_of_codimension_at_most_2}, where we give a complete classification of the connected components
$
 W \subset I \scMbar _{ g, n }
$
with \( c_{ W } = 0, 1,\) or \(2\)
for all
$
 ( g, n )
$.
They are given in
\cref{proposition:classification_of_twisted_sectors_of_codim_0},
\cref{summary:classification_of_twisted_sectors_of_codim_1},
and
\cref{summary:classification_of_twisted_sectors_of_codim_2}, respectively, inductively on \( c _{ W }\).
These are the components which appear in the right hand side of \eqref{eq:Intro_HKR_for_DM_stack}.
The general pattern is that there are twisted sectors of common type which appear for almost all
$
 ( g, n )
$, and then there are exotic ones which appear only for small
$
 ( g, n )
$.

The second step is in \cref{section:The_vanishing_of_contributions_from_the_twisted_sectors}, where we prove the vanishing of the contributions from the connected components of common type by using the modular interpretation of the stratification of \( \cMbar _{ g, n } \). This part is similar to the proof of \cref{theorem:no_bivector_on_Mgnbar}, but more involved.
When
$
 g + n \ge 5
$,
no exotic component shows up, so we can conclude the vanishing of the right hand side of \eqref{eq:Intro_HKR_for_DM_stack}.
By dimension reasons we can also settle
$
 \scMbar _{ 1, 1 }
$,
being left with the 8 cases \eqref{eq:remaining_cases}.

For the cases \eqref{eq:remaining_cases}, in \cref{corollary:descriptions_of_the_remaining_cases} we explicitly describe the contributions from the exotic components which remain in the right hand side of \eqref{eq:Intro_HKR_for_DM_stack}.
In order to prove the vanishing of these contributions, we need to investigate the negativity of the normal bundles of those exotic loci of
$
 \scMbar _{ g, n }
$, such as the divisor of hyperelliptic curves in
$
 \scMbar _{ 3, 0 }
$
and so on.
We leave them as interesting open questions on the geometry of
$
 \scMbar _{ g, n }
$.

\subsection*{Acknowledgements}
The authors would like to thank Yuri Manin for asking them
about noncommutative deformations of
$
\Mbar_{0,n}
$ and for providing them with the unpublished note
\cite{kapranov}.
They would also like to thank Osamu Fujino for the useful information
about Hodge theory on not-necessarily-simple normal crossing pairs, and Kazushi Ueda for quite useful discussions about $\scMbar_{1,2}$.
They are also indebted to him and Luca Tasin for pointing out that the HKR isomorphism is more subtle in the case of Deligne--Mumford stacks, which was overlooked in the first preprint version of this paper.
They appreciate the Max-Planck-Institute f\"ur Mathematik
for their hospitality and support. This work was initiated during the authors' stay there.
During the preparation of this paper, S.O. was partially supported by JSPS Grant-in-Aid for Young Scientists No.~25800017,
Grants-in-Aid for Scientific Research
(16H05994,
16K13746,
16H02141,
16K13743,
16K13755,
16H06337,
18H01120)
and the Inamori Foundation.
T.S. was partially supported by JSPS Grant-in-Aid for Young Scientists No.~JP16K17573 and Grant-in-Aid for Scientific Research JP17H06127.

\begin{notationconvention}
Throughout the paper we work over a field $\bfk$, whose characteristic is assumed to be 0 unless otherwise stated.
An \emph{elliptic curve} is a smooth projective curve of genus $1$ with a marked point,
which will be tacitly assumed to be the origin of the group structure. A \emph{negation} of an elliptic curve is the automorphism induced by the negation by the group structure.
\emph{HE} stands for hyperelliptic. A \emph{Weierstrass point} is a fixed point of a HE involution.
The \emph{Weierstrass divisor} of
$
 \scMbar _{ 2, 1 }
$
is the divisor whose general point represents a smooth curve of genus 2 whose marked point is a Weierstrass point.
For a Deligne--Mumford stack $X$,
$
 \nu \colon X ^{ \nu } \to X
$
denotes its normalization.
For a morphism $f \colon X \to Y$ of Deligne--Mumford stacks,
$
 I f \colon I X \to I Y
$
denotes the induced morphism between the inertia stacks.
\end{notationconvention}

\section{Preliminaries}

\subsection{Recap of $ \scMbar_{g,n}$}
We recall some standard facts
about the moduli of stable pointed curves.
For statements without proofs, see \cite{MR2460695} and references therein.

Given a pair of integers
$ ( g , n ) $
satisfying the three conditions
\begin{equation}\label{eq:condition_for_Mbar_g_n_nonempty}
n \ge 0 ,
\quad
g \ge 0 ,
\quad
2g - 2 + n > 0
\left(
    \iff
   ( g, n )
   \neq
   ( 0, 0 ),
   ( 0 , 1 ),
   ( 0, 2 ),
   ( 1, 0 )
\right)
\end{equation}
we have the category
$
\scMbar_{ g , n }
$
of stable
$
n
$-pointed genus
$
g
$
curves fibered in groupoids over the category
$
( \Sch / \bfk )
$.
It is a smooth proper Deligne--Mumford stack of dimension
$
3g-3 + n.
$
There exists the universal curve
$
 \pi \colon \cU_{g , n} \to \scMbar_{ g , n },
$
which itself is a smooth proper Deligne--Mumford stack,
and for
$
i = 1 , 2 , \dots , n
$
we denote by
$
 \sigma_i \colon \scMbar_{ g , n } \to \cU_{g , n}
$
the section of $ \pi $ representing the $ i $-th marked point.
With these notations, the
\emph{$\psi$-line bundles} on
$
\scMbar_{g,n}
$
are defined as
$
\psi_{ i }
\coloneqq
(\sigma_{i})^* \omega_{\pi}
$
for
$
i=1,2,\dots, n
$.

\begin{lemma}\label{lemma:psi_classes_are_nef_and_big}
Suppose
$
n>0
$.
Then for any
$
i=1,2,\dots, n
$,
the line bundles
$
\psi_i
$
are nef and big.
\end{lemma}

\begin{proof}
For $i=n$, the assertion follows from
\cite[Theorem 3.2 and Lemma 4.3]{MR2460695}
(see also
\cite[Section 4]{MR1680559})).
For other $ i $,
use the fact that the symmetry group
$
\frakS_n
$
acts on
$
\scMbar_{g,n}
$
in such a way that the transposition
$
( i , n )
$
sends
$
\psi_i
$
to
$
\psi_n
$.
\end{proof}

There exists a closed substack
$
 \cB \subset \scMbar_{g,n}
$
representing singular stable pointed curves.
It is a reduced normal crossing divisor.
The open substack of smooth pointed stable curves will be
denoted by
$
 \cM_{g,n} \coloneqq \scMbar_{g,n} \setminus \cB.
$

The coarse moduli space 
$ p \colon \scMbar_{g,n} \to \Mbar_{g,n}
$
is a morphism to a projective variety $\Mbar_{g,n}$. 
When
$
g = 0
$,
the morphism $ p $
is an isomorphism.
The following positivity is an essential ingredient of our proof of
\cref{theorem:no_bivector_on_Mgnbar}.

\begin{proposition}
\label{proposition:ampleness_of_the_log_canonical_divisor}
For any
$
( g , n )
$
satisfying
\eqref{eq:condition_for_Mbar_g_n_nonempty},
the log canonical bundle
$
 \omega_{\scMbar_{g,n}} ( \cB )
$
is ample.
\end{proposition}

\begin{proof}
This is a special case of
\cite[Theorem 4.1]{MR2831833} where
\(
	a _{ 1 } = a _{ 2 } = \cdots = a _{ n } = 1
\).
\end{proof}

We also use the inductive nature of the boundary divisors.

\begin{lemma}\label{lemma:inductive_structure_of_the_boundary}
Let
$
\cB'
$
be a connected component of the normalization
$
\cB^{1} \twoheadrightarrow \cB
$, and $\cN$ be the normal bundle of the morphism
$
 \cB'  \to \scMbar_{g,n}
$.
Then
$
 \cB '
$
 admits a finite surjective \'etale morphism
$
\rho \colon
\cC'
\twoheadrightarrow
\cB'
$
from a smooth Deligne--Mumford stack $\cC'$ which satisfies one of the following properties (hence $\cB'$ is also smooth).

\begin{enumerate}[(i)]
\item\label{it:inductive_structure_of_the_boundary;reducible}
$
\cC'
$
is isomorphic to
$
\scMbar_{g' , n' } \times
\scMbar_{g'' , n'' }
$, where
$
g' + g'' = g
$
and
$
n' + n'' = n + 2
$, and
$
 \rho
$
is of degree $2$ when $g' = g''$ and $n'=n''=0$, and an isomorphism otherwise. Moreover, the pullback of $\cN^{\vee}$ to $\cC'$ is sent to
$
 \pr_1^* \psi_{n'} \otimes \pr_2^* \psi_{n ''}
$
under the isomorphism.

\item\label{it:inductive_structure_of_the_boundary;irreducible}
$
\cC'
$
is isomorphic to
$
\scMbar_{g-1 , n+2}
$
and
$
 \rho
$
is of degree two. Moreover, the pullback of $\cN^{\vee}$ to $\cC'$ is sent to
$
 \psi_{n+1} \otimes \psi_{n+2}
$
under the isomorphism.
\end{enumerate}
\end{lemma}

\begin{proof}
See
\cite[p. 813, 814]{MR2460695}.
\end{proof}

As implicitly stated in \cref{lemma:inductive_structure_of_the_boundary},
irreducible components of the boundary divisor $\cB$ are classified as follows.

\begin{definition}\label{definition:boundarydivisors}
Let $g, n \ge 0$ be as in \eqref{eq:condition_for_Mbar_g_n_nonempty}. Let
$
 [n] \coloneqq \{1, \ldots, n \}
$.
For $0 \le j \le g$ and $S \subset [n]$, 
the irreducible divisor
$
 \cB _{ j : S } \subset \scMbar_{g,n}
$
is defined to be the closure of the locus of nodal curves $C_1 \cup C_2$ with one node $p$, where 
$C_1$ is a smooth curve of genus $j$ with marked points $\{p_i \mid i \in S, \ p_i \neq p \}$ and $C_2$ is a smooth curve of genus 
$g-j$ with marked points $\{ p_i \mid i \in [n] \setminus S, p_i \neq p \}$.
Also we let $\cB_0 \subset \scMbar_{g,n}$ denote the closure of the locus of irreducible nodal curves $C$ with one node.
Both $\cB _{ j : S }$ and $\cB _{ 0 }$ are known to be normal crossing divisors, from the deformation theory of stable curves (\cite[p.~81]{MR0262240}).
\end{definition} 

\subsection{Inertia stack}\label{section:inertia_stack}

We also need to understand the inertia stack of
$
 \scMbar _{ g, n }
$.

\begin{definition}\label{definition:inertia_stack}
The \emph{inertia stack}
$
 IX
$
of a Deligne--Mumford stack
$
 X
$
separated over a field
$
 \bfk
$
is defined as the fiber product described in the diagram below.

\begin{align*}
\xymatrix{
\ar @{} [dr] |{\square}
IX \ar[r]^{\pr_2} \ar[d]_{\pr_1}
&
X \ar[d]^{\Delta}
\\
X \ar[r]_{\Delta}
&
X \times X}
\end{align*}
\end{definition}

We recall some basic properties of the inertia stack.

\begin{definition-lemma}
\label{definition-lemma:inertia_stack_basic_facts}
\begin{enumerate}
 \item\label{it:another_description_of_inertia_stack}
 As a category fibered in groupoids over
 $
  ( \Sch / \bfk )
 $,
 $IX$ is equivalent to the category defined as follows; by an abuse of notation, we use the same symbol $IX$ for the category.
 \begin{itemize}
  \item
 An object is a pair
 $
  ( x, g )
 $,
 where
 $
  x \in \obj ( X )
 $
 and
 $
  g \in \Aut { ( x ) }
 $.
 \item
  A morphism from
  $
   ( x, g )
  $
  to
  $
   ( y, h )
  $
  is a morphism
  $
   f \colon x \to y
  $
  which makes the following diagram commutative.
 
\begin{align*}
\xymatrix{
\ar @{} [dr] |{\circlearrowleft}
x \ar[r]^{f} \ar[d]_{g}
&
y \ar[d]^{h}
\\
x \ar[r]_{f}
&
y
}
\end{align*}
\item
The structure functor
$
 IX \to ( \Sch / \bfk )
$
is the composition of the forgetful functor
$
 F \colon ( x, g ) \mapsto x
$
and the structure functor
$
 X \to ( \Sch / \bfk )
$.
\end{itemize}

The stack defined above is the fiber product via the morphisms
$
 \pr _1 = \pr _2 = F
$.

 \item
 The canonical morphism
 $
  \id_X \times \id_X \colon X \to IX
 $
 is a section for both
 $
  \pr_1
 $
 and
 $
  \pr_2
 $.
 Moreover is an isomorphism onto its image, which is a connected component of
 $
  X
 $
 and will be called the \emph{untwisted sector} of
 $
  IX
 $.
 \item
 A connected component
 $
  W \subset IX \setminus X
 $
 is called a \emph{twisted sector}.

\item
 If
 $
  X
 $
 is smooth, so is $IX$.

 \item
 For each twisted sector
 $
  W
 $,
 the canonical morphism
 $
  W \to IX \xto[]{F} X
 $
 factors into an \'etale finite morphism onto its image followed by a closed immersion of the image into
 $
  X
 $.
 
\end{enumerate}
\end{definition-lemma}
\begin{proof}
See \cite{MR1005008} and \cite{Abramovich-Graber-Vistoli}, and \cite[Section 4.1]{pagani2009chen}.
\end{proof}

\begin{lemma}\label{lemma:inertia_commutes_with_product}
Let
$
 X, Y
$
be Deligne--Mumford stacks which are separated over $\bfk$. Then there is a canonical isomorphism of stacks
\begin{align}\label{eq:inertia_commute_with_product}
 I \lb X \times Y \rb \simeq I X \times I Y.
\end{align}
\end{lemma}

\begin{proof}
Note first that the product
$
 X \times Y
$
is explicitly given as the category fibered in groupoids over $\Sch / \bfk$ which is described as follows.
\begin{itemize}
\item
An object is a pair
$
 ( x, y )
$
of objects
$
 x \in X
$
and
$
 y \in Y
$
over the same base scheme.

\item
An morphism from
$
 ( x, y )
$
to
$
 ( x ', y ' )
$
is a pair
$
 \lb f, g \rb
$,
where
$
 f \in X ( x, x ' )
$
and
$
 g \in Y ( y, y ' )
$.
\end{itemize}

By using the explicit model for inertia stacks as described in \cref{definition-lemma:inertia_stack_basic_facts} \cref{it:another_description_of_inertia_stack}, an isomorphism of the categories between
$
 I \lb X \times Y \rb
$
and
$
 I X \times I Y
$
is explicitly given as follows: an object
$
 \lb ( x, y ), ( g, h ) \rb \in I \lb X \times Y \rb
$
corresponds to the object
$
 \lb ( x, g ), ( y, h ) \rb \in I X \times I Y
$
and vice versa. Similarly, a morphism
$
 \lb \varphi, \psi \rb \colon
 \lb ( x, y ), ( g, h ) \rb
 \to
 \lb ( x ', y ' ), ( g ', h ' ) \rb
$
corresponds to
$
 \lb \lb \varphi \colon ( x, g ) \to ( x ' , g ' ) \rb, \psi \colon ( y, h ) \to ( y ', h ' ) \rb
$
and vice versa.
\end{proof}

\begin{lemma}\label{lemma:closed_immersion_is_fully_faithful}
Let
$
 i \colon X \to Y
$
be a closed immersion of Deligne--Mumford stacks separated over $\bfk$.
Then $i$, as a functor, is fully faithful.
\end{lemma}

\begin{proof}
	This follows from
	\cite[\href{https://stacks.math.columbia.edu/tag/0504}{Tag 0504}]{stacks-project}
	and
	\cite[\href{https://stacks.math.columbia.edu/tag/04ZZ}{Tag 04ZZ}]{stacks-project}.
\end{proof}

\begin{lemma}\label{lemma:closed_immersion_is_a_one_monomorphism}
Let
$
 i \colon X \to Y
$
be a closed immersion of Deligne--Mumford stacks separated over $\bfk$.
Then for all morphisms of Deligne--Mumford stacks
$
 f \colon Z \to X
$
and
$
 g \colon W \to X
$,
the natural morphism
$
 F \colon Z \times _{ f, X, g } W \to Z \times _{ i \circ f, Y, i \circ g } W
$
is an isomorphism of stacks.
\end{lemma}

\begin{proof}
By \cref{lemma:closed_immersion_is_fully_faithful}, the functor $i$ is fully faithful.
By using this, we construct a quasi-inverse
$
 G \colon Z \times _{ i \circ f, Y, i \circ g } W \to Z \times _{ f, X, g } W
$
to $F$.

An object of
$
 Z \times _{ i \circ f, Y, i \circ g } W
$
is a triple
$
 \lb z, w, \theta \colon i ( f ( z ) ) \simto i ( g ( w ) ) \rb
$.
Since $i$ is fully faithful, there is a unique
$
 \theta ' \colon f ( z ) \to g ( w )
$
such that
$
 i ( \theta ' ) = \theta
$.
We let
$
 G \lb \lb z, w, \theta \rb \rb \coloneqq \lb z, w, \theta ' \rb
$.
One can also easily define the action of $G$ on morphisms, and can easily verify that
$
 G
$
is a quasi-inverse of $F$. Thus we conclude the proof.
\end{proof}

\begin{proposition}
Let
$
 i \colon X \to Y
$
be a closed immersion of Deligne--Mumford stacks separated over $\bfk$.
Suppose that a connected component
$
 W \subset IY
$
satisfies the property that the natural morphism
$
 F \colon W \to Y
$
factors through $i$. Then there is a unique (up to natural isomorphism) open and closed immersion
$
 \Fbar \colon W \to IX
$
such that
$
 \lb W \hookrightarrow I Y \rb = \lb I X \to I Y \rb \circ \Fbar
$.
\end{proposition}

\begin{proof}
Consider the following diagram, where
$
 Z = Y \times Y
$.
\begin{align}
 \xymatrix{
 X \times _{ Z } X \ar[r] \ar[d] & Y \times _{ Z } Y \ar[r] \ar[d] & Z \ar[d] ^{ \Delta _{ Z } }\\
 X \times X \ar [r] _{ i \times i } & Y \times Y \ar[r] _{ \Delta _{ Y } \times \Delta _{ Y } } & Z \times Z
 }
\end{align}
Since both the outer square and the right square are cartesian, so is the left one.
Since
$
 X \times X \to Y \times Y
$
is a closed immersion, it then follows that so is the left upper horizontal morphism.

On the other hand, since
$
 X \times X \to Y \times Y
$
is a closed immersion of Deligne--Mumford stacks separated over $\bfk$, by \cref{lemma:closed_immersion_is_a_one_monomorphism} there is a natural isomorphism
$
 I X = X \times _{ X \times X } X
 \simto
 X \times _{ Z } X
$,
under which the natural morphism
$
 I X
 \to
 I Y
$
is identified with the left upper horizontal morphism. Hence it is a closed immersion.

Let
$
 F ' \colon W \to X
$
be the morphism such that
$
 F = i \circ F '
$.
This yields the morphism
$
 F ' \times _{ Y \times Y } F ' \colon W \to X \times _{ Y \times Y } X
$.
This uniquely lifts to a morphism
$
 \Fbar \colon W \to X \times _{ X \times X } X = I X
$
with the desired properties. To see that $\Fbar$ is open and closed, note that
$
 \lb W \hookrightarrow I Y \rb = \lb I X \to I Y \rb \circ \Fbar
$
has the same properties and that
$
 I X \to I Y
$
is representable and separated, as we saw above.
\end{proof}

\subsection{Hochschild cohomology of Deligne--Mumford stacks}

The notion of \emph{flat deformation of abelian categories}
is introduced in \cite{MR2238922}.
In \cite{MR2183254}, it is proven that flat deformations of
a
$
 \bfk
$-linear Grothendieck category
$
 \cA
$
is controlled by the Hochschild dgla
$
 \bsC _{ \ab } ( \cA ) [ 1 ]
$.
In particular, the cohomology space,
$
 \cH ^{ 1 } \lb \bsC _{ \ab } ( \cA ) [ 1 ] \rb \eqqcolon H\bsC _{ \ab } ^2 ( \cA )
$,
is in bijection with the set of first order deformations of
$
 \cA
$
up to equivalence.

If
$
 \cA
$
is locally Noetherian, i.e., generated by Noetherian objects, then
there is essentially no difference between deformations of
$
 \cA
$
and those of the full subcategory of Noetherian objects.
(see \cite[Theorem 8.8]{MR2238922}).
This in particular applies to the categories
$
 \coh X \subset \Qcoh X
$,
where
$
 X
$
is a separated and Noetherian Deligne--Mumford stack over a field
$
 \bfk
$.

There is a quasi-isomorphism of Hochschild complexes
$
 \bsC \lb \bfD ( \Qcoh X ) \rb \simto \bsC _{ \ab } ( \Qcoh X )
$,
where
\begin{align}
 \bsC \lb \bfD ( \Qcoh X ) \rb
 \coloneqq
 \bR \underline{\Hom} _{ \bfD ( \Qcoh X ) \otimes ^{ \bL } \bfD ( \Qcoh X ) ^{ \op } } \lb \bfD ( \Qcoh X ), \bfD ( \Qcoh X ) \rb
\end{align}
is the Hochschild complex of (a dg-enhancement of) the  unbounded derived category of quasi-coherent sheaves on $X$ as defined in \cite[Section 8.1]{Toen_HTDGC} (see \cite[p. 1558 (3)]{MR2474321}). It is shown in \cite[Corollary 8.1]{Toen_HTDGC} that
\begin{align}
 \bsC \lb \bfD ( \Qcoh X ) \rb
 \simeq
 \bR \underline{\Hom} \lb \bfD ( \Qcoh X ), \bfD ( \Qcoh X ) \rb \lb \id, \id \rb.
\end{align}
On the other hand, consider the complex
\begin{align}
 \bsC _{ \Swan } ( X )
 \coloneqq
 \Hom ^{ \bullet } _{ X \times X } \lb \Delta _{ * } \cO _{ X }, \Delta _{ * } \cO _{ X } \rb.
\end{align}

\begin{theorem}\label{theorem:derived_morita_for_dm_stack}
Let
$
 X
$
be a separated and quasi-compact Deligne--Mumford stack over a field
$
 \bfk
$ of characteristic \(0\).
Then there is an isomorphism
$
 \bsC \lb \bfD ( \Qcoh X ) \rb \simeq \bsC _{ \Swan } ( X )
$
in the homotopy category of complexes of
$
 \bfk
$-vector spaces.
\end{theorem}

The following proof of \cref{theorem:derived_morita_for_dm_stack} is obtained independently of the similar but more thorough \cite[Proposition A.1]{MR4057490}.

\begin{proof}
In the case when
$
 X
$
is a scheme, the assertion is shown in \cite[Theorem 8.9]{Toen_HTDGC}.
In fact, the isomorphism
\begin{equation}\label{eq:Toen}
 \bR \underline{\Hom} _{ c } ( L _{ \Qcoh } ( X ), L _{ \Qcoh } ( X ) ) \simto
 L _{ \Qcoh } ( X \times X )
\end{equation}
of
\cite[Theorem 8.9]{Toen_HTDGC}
in the homotopy category of dg-categories induces an isomorphism of dg
$
 \bfk
$-modules
\begin{equation}
 \bR \underline{\Hom} ( L _{ \Qcoh } ( X ), L _{ \Qcoh } ( X ) ) ( \id, \id )
 \simto
 L _{ \Qcoh } ( X \times X ) ( \Delta _{ * } \cO_X, \Delta _{ * } \cO_X )
\end{equation}
in the homotopy category of complexes of $\bfk$-vector spaces. The LHS of \eqref{eq:Toen} denotes the dg category of dg endofunctors commuting with arbitrary coproducts, which obviously contains
$
 \id
$,
and
$
 L _{ \Qcoh } X
$
is the notation for
$
 \bfD ( \Qcoh X )
$
in \cite{Toen_HTDGC}.

The proof of \cite[Theorem 8.9]{Toen_HTDGC} can be extended to the case of Deligne--Mumford stacks modulo some modifications. In the arguments, some properties of the compact generators of
$
 \bfD ( \Qcoh X )
$
of a quasi-compact and separated scheme
$
 X
$
are used. It is enough to check that these properties are fulfilled by Deligne--Mumford stacks.

One of them is that
$
 \bfD ( \Qcoh X )
$
admits a compact generator. By \cite[Theorem 1.2]{2014arXiv1405.1888H} and \cite[Theorem A]{MR3705292} this generalizes to quasi-compact and separated Deligne--Mumford stacks $X$ over $\bfk$.
The other property is about the exterior product of compact generators on the product stacks, which we treat as the following \cref{lemma:bondal_vandenberg_for_stack}.
\end{proof}

\begin{lemma}[{$=$\cite[Lemma 3.4.1]{Bondal-van_den_Bergh}
for Deligne--Mumford stacks}]\label{lemma:bondal_vandenberg_for_stack}
Let
$
 X, Y
$
be quasi-compact and separated Deligne--Mumford stacks over $\bfk$.
Assume that
$
 E, F
$
are compact generators of
$
 \bfD ( \Qcoh X )
$
and
$
 \bfD ( \Qcoh Y )
$. Then
$
 E \boxtimes F \in \bfD ( \Qcoh X \times Y )
$
is again a compact generator.
\end{lemma}
\begin{proof}
Take atlases
$
 U \to X
$
and
$
 V \to Y
$
with
$
 U, V
$
affine and consider the flat morphism
$
 \pi \colon U \times V \to X \times Y
$.
Take any
$
 G \in \bfD ( \Qcoh X \times Y )
$
with
$
 \Hom ( E \boxtimes F , G [ m ] ) = 0
$
for all
$
 m \in \bZ
$.
Then by the same arguments as in the proof of
\cite[Lemma 3.4.1]{Bondal-van_den_Bergh}, using that
$
 U, V
$
are affine, one can show that
$
 \pi ^{ * } G = 0
$.
Since
$
 \pi
$
is flat, for each
$
 i \in \bZ
$
one has
$
 \pi ^{ * } \cH ^{ i } ( G )
 \simeq
 \cH ^{ i } \lb \pi ^{ * } G \rb
 =
 0
$,
so that
$
 \cH ^{ i } ( G ) = 0
$
by the surjectivity of $\pi$. Thus we conclude the proof.
\end{proof}
We next recall the
\emph{Hochschild-Kostant-Rosenberg isomorphism}
(HKR isomorphism for short) for Swan's Hochschild cohomology
$
 \bsC _{ \Swan }
$, as established in~\cite{2025arXiv250900501F}:

\begin{theorem}[{\(=\)\cite[Corollary~4.16]{2025arXiv250900501F}}]\label{theorem:HKR_for_DM_stack}
Let
$
 X
$
be a smooth proper Deligne--Mumford stack over a field
$
 \bfk
$
of characteristic
$
 0
$.
Then there is an isomorphism
\begin{equation}\label{eq:HKR_for_DM_stack}
 H\bsC _{ \Swan }^i ( \Qcoh X )
 \simto
 \bigoplus _{ W \subset IX }
 \bigoplus _{ p + q = i }
 H ^{ q - c _W } \lb W, \wedge ^{ p } \Theta _{ W / \bfk } \otimes \wedge ^{ c_W } \cN _{ W / X }\rb,
\end{equation}
where
\begin{itemize}
 \item
 $
  W \subset IX
 $
 is either the untwisted or a twisted sector of the inertia stack
 $
  IX
 $
 of
 \(
    X
 \),
 \item
 $
  c _{ W } = \dim X - \dim W \ge 0
 $, and
 \item
 $
  \cN _{ W / X }
 $
 is the normal bundle of the morphism
 $
  W \to X
 $.
\end{itemize}
\end{theorem}
By
 \cref{definition-lemma:inertia_stack_basic_facts},
$
 \cN _{ W / X }
$
is a locally free sheaf of rank
$
 c_W
$.
When
$
 X
$
is a scheme, then there are no twisted sectors and we obtain the original HKR isomorphism.
\begin{remark}\label{remark:it is now a theorem!}
    In a previous version of this paper, we stated \cref{theorem:HKR_for_DM_stack} as a conjecture and the main results for the cases
    \(
       g > 0
    \)
    were conditional on it. 
\end{remark}

\section{Logarithmic differential forms and the Kodaira--Akizuki--Nakano vanishing theorem on Deligne--Mumford stacks}
\label{section:log_differential_forms} 

For a variety $X$ over $\bfk$,
the $p$-th exterior product of the sheaf of K\"ahler differentials will be denoted by
$\Omega^p_X$. Its double dual $(\Omega^p_X)^{**}$
will be denoted by $\tilde{\Omega}^p_X$.

\begin{lemma}\label{lemma:Kahler_differential_under_quotient}
	Let $Y$ be a smooth affine variety over a field
    \(
       \bfk
    \)
    of characteristic \(0\), and $G$ a finite group acting on $Y$. 
	Let $X\coloneqq Y/G$ be the quotient by the $G$-action and $\pi \colon Y \to X$ the quotient morphism. 
Then we have a canonical isomorphism 
 \[
 \tilde{\Omega}^p_X \simto (\pi_* \Omega^p_Y)^G,  
 \]
 where the RHS\ is the $G$-invariant part of the
 coherent sheaf $\pi_*\Omega^p_Y$. 
 	\end{lemma} 
	
\begin{proof}
This is a special case of \cite[Theorem 2]{MR1451789}.
Note that since $G$ is finite, being horizontal
(see \cite[Introduction]{MR1451789} for the definition)
is an empty condition and $\pi$ contracts no divisor.
\end{proof}
		
\begin{corollary}\label{corollary:differential_forms_on_coarse_moduli}
Let
$
\cX
$
be a smooth Deligne--Mumford stack separated over
$
\bfk
$
of characteristic \(0\),
which admits a coarse moduli scheme
$
 c \colon \cX \to X.
$
Then there exist a natural isomorphism
\begin{equation}
\tilde{\Omega}^p_X
\simto
\bR c_*\Omega^p_{\cX}.
\end{equation}
\end{corollary}

\begin{proof}
The natural morphism
$
 c_*\Omega_{\cX}^p \to \bR c_*\Omega^p_{\cX}
$
is an isomorphism by
\cite[Lemma 2.3.4]{MR1862797}.
In the rest we show that the canonical morphism
$
 c^* \colon \tilde{\Omega}^p_X \to
c_*\Omega^p_{\cX}
$
is an isomorphism.

By
\cite[Lemma 2.2.3]{MR1862797},
\'etale locally on
$
X
$,
the morphism $c$ is of the form
\begin{equation}
c \colon [ U / G ] \to U / G.
\end{equation}
Here $ G $ is a finite group acting algebraically on
a smooth affine variety $ U $.
Recall that there exists a canonical equivalence
of categories
$
 \coh [ U / G ] \simto \coh^G U
$
and that, under this equivalence,
the pushforward functor
$
 c_* \colon \coh [U / G] \to \coh U/G 
$
is identified with
\begin{equation}
\coh^G U \to \coh U/G; \quad \quad F \mapsto (\pi_* F)^G.
\end{equation}
Here
$
\pi
\colon
U
\to
U / G
$
is the quotient morphism. Then we get
\begin{align}
    c _{ \ast } \Omega _{ U } ^{ p }
    =
    \left(
        \pi _{ \ast } \Omega _{ U } ^{ p }
    \right)
    ^{ G }
    \stackrel{\textrm{\cref{lemma:Kahler_differential_under_quotient}}}{\simeq}
    \tilde{ \Omega } _{ U / G } ^{ p }.
\end{align}
\end{proof}

\begin{lemma}\label{lemma:hodge_symmetry}
Let
$
\cX
$
be a smooth proper Deligne--Mumford stack over
$
\bC
$. 
Then the Hodge symmetry
\begin{equation}
\dim H^q ( \cX , \Omega^p_{\cX} )
=
\dim H^p ( \cX , \Omega^q_{\cX} )
\end{equation}
holds.
\end{lemma}

\begin{proof}
Let
$
 c \colon \cX \to X
$
be the morphism to the coarse moduli space.
Since
$
X
$
is a proper $V$-manifold which is bimeromorphic to a K\"{a}hler manifold,
the Hodge symmetry
\begin{equation}
\dim H^q ( X , \tilde{\Omega}^p_{X} )
=
\dim H^p ( X , \tilde{\Omega}^q_{X} )
\end{equation}
holds for any
$
( p , q )
$
by \cite[Theorem 2.43]{MR2393625}.
By
\cref{corollary:differential_forms_on_coarse_moduli} we obtain the isomorphisms
\begin{equation}
H^p ( X , \tilde{\Omega}^q_{X} )
\simeq
H^p ( X , \bR c_*\Omega^q_{\cX} )
\simeq
H^p ( \cX , \Omega^q_{\cX} ),
\end{equation}
to conclude the proof.
\end{proof}

Next we define the notion of orientation sheaf
for normal crossing pairs of Deligne--Mumford stacks; namely, pairs
\(
   \left(
    \cX,
    \cD
   \right)
\)
of a smooth Deligne--Mumford stack
\(
   \cX
\)
and a normal crossing divisor
\(
   \cD
\) on it.
In the case of varieties,
this is originally introduced in \cite[3.1.4]{zbMATH03348292}.

\begin{definition-proposition}\label{definition-proposition:orientation}
Let $(\cX, \cD)$ be a normal crossing pair of a smooth Deligne--Mumford stack
$\cX$ and a normal crossing divisor $\cD$ on it.
For each non-negative integer
$
k
$,
let $\cD^{(k)} \subset \cX$ be the closed substack defined by the following rule;  
\begin{itemize}
\item $\cD^{(0)} = \cX$. 
\item $\cD^{(1)} = \cD$. 	
\item $\cD^{(k)} = \Sing \cD^{(k-1)}$ for $k>1$. 
\end{itemize}

Let $\varphi^k \colon \cD^{k} \to \cD^{(k)}$ be the normalization. 
Take an \'{e}tale atlas
$
U \to \cX
$ by a smooth scheme $U$ such that
$
D\coloneqq \cD \times_{\cX} U
$
is a simple normal crossing divisor. 
Set
$
D^{(k)}\coloneqq \cD^{(k)} \times_{\cX} U
$,
$
D^{k} \coloneqq \cD^{k} \times_{\cX} U
$
and
$
\varphi_U^{k} \colon D^{k} \to U
$
the natural morphism. 
Consider the locally constant sheaf
$E_{D^k}$
on
$D^k$
of finite sets of cardinality $k$ defined as follows:
for a connected open set $W \subset D^{k}$,
let $E_{D^k}(W)$ be the set of irreducible components of
$D$ which contain $\varphi_U^{k}(W)$. 

Let $p_{i} \colon U \times_{\cX} U \to U$ and 
$p^{k}_i \colon D^{k} \times_{\cD^{k}} D^{k} \to D^{k}$ 
be the projections for $i=1,2$. 
We define the gluing isomorphisms
$
\phi^U_{12} \colon (p^{k}_2)^* E_{D^k} \simto (p^{k}_1)^* E_{D^k}
$ as follows. 
Let $E_{D^k \times_{\cD^{k}} D^k}$ be the local system on $D^{k} \times_{\cD^{k}} D^{k}$ corresponding to the atlas
$U \times_{\cX} U \to \cX$.
Since
$
p_i^*D
$
is canonically isomorphic to
$
D \times_{\cD} D
$,
we can define natural isomorphisms
$
b_i \colon
E_{D^k \times_{\cD^k} D^k}
\simto
(p^{k}_i)^* E_{D^k}
$ 
for $i=1,2$. 
Then we set
\[
\phi^U_{12}
\coloneqq
b_1 \circ b_2^{-1}
\colon
(p^{k}_2)^* E_{D^k}
\simto
(p^{k}_1)^* E_{D^k}.
\]
 
On the triple product
$
 D^k \times_{\cD^k} D^k \times_{\cD^k} D^k
$
we can check the cocycle condition
\[
p_{12}^* \phi^U_{12} \circ p_{23}^* \phi^U_{23} = p_{13}^* \phi^U_{13}, 
\] 
where
$
 p_{ i j } \colon D^k \times_{\cD^k} D^k \times_{\cD^k} D^k \to D^k \times_{\cD^k} D^k
$
is the projection to the $i$-th and the $j$-th components for $1 \le i < j \le 3$. 
Thus these data determines a local system $E_{\cD^{k}}$ on $\cD^{k}$. 

Consider the locally free sheaf
$
\cF_{\cD^k}
\coloneqq
\cO_{\cD^k}^{\oplus E_{\cD^k}}
$
of rank
$
k
$
on
$
\cD^k
$.
Concretely, for each connected open subset
$
W \subset D^k
$
we have the description
\[
 \cF_{D^k}(W) = \bigoplus_{\Delta \in E_{D^k}(W)} \cO_{D^k}(W) \cdot e_{\Delta}, 
\]
where $e_{\Delta}$ is a formal basis element corresponding to the irreducible component $\Delta$. 
The gluing isomorphisms
$
\phi^U_{12}
$
of
$
E_{D^k}
$
provides us with the canonical isomorphisms
$
p_2^* \cF_{D^k} \simto p_1^* \cF_{D^k}
$
satisfying the cocycle conditions.

Finally we define the \emph{orientation sheaf} as the line bundle 
$
\cE_{\cD^{k}}\coloneqq \det \cF_{\cD^{k}} 
$ 
on $\cD^{k}$.
One can easily check
$
\cE_{\cD^{k}}^{\otimes 2} \simeq \cO_{\cD^k}
$.
Since the local system
$E_{\cD^1}$
is trivial, we have
$
\cE_{\cD^1} \simeq \cO_{\cD^1}
$.
\end{definition-proposition}

	\begin{lemma}\label{lemma:resolution_for_snc}
	Let
	$
	( \cX , \cD )
	$
	be a normal crossing pair of Deligne--Mumford stacks.
	Then for each
	$p \ge 0$ there exist the following exact sequences.
	\begin{equation}\label{eq:res_for_log_differential_forms}
	0
	\to
	\Omega_{\cX}^p ( \log \cD )( - \cD )
	\to
	\Omega_{\cX}^p
	\to
	\varphi^{1}_* \Omega_{\cD^{1}}^p(\cE_{\cD^{1}}) 
	\xto{r_1} 
	\varphi^{2}_* \Omega_{\cD^{2}}^p (\cE_{\cD^{2}})
	\xto{r_2}
	\cdots
	\end{equation}

	\begin{equation}\label{eq:res_for_log_vector_fields}
	\begin{split}
	0
	\to
	\Theta_{\cX}^p (- \log \cD )
	\to
	\Theta_{\cX}^p
	\to
	\varphi^1_*
	\lb
	\Theta_{\cD^{1}}^{p-1}(\cE_{\cD^1}) \otimes \det \lb \cN_{\cD^{1} / \cX} \rb
	\rb
	\\
	\to
	\varphi^2_*
	\lb
	\Theta_{\cD^{2}}^{p-2}(\cE_{\cD^2}) \otimes \det \lb \cN_{\cD^{2} / \cX} \rb
	\rb
	\to
	\cdots
	\end{split}
	\end{equation}
	\end{lemma}

	\begin{proof}
	For each integer $k$, the restriction homomorphism 
	\[
	r_k \colon \varphi^{k}_* \Omega_{\cD^{k}}^p(\cE_{\cD^{k}}) 
	\to \varphi^{k+1}_* \Omega_{\cD^{k+1}}^p(\cE_{\cD^{k+1}})
	\]
	is defined as follows.
	 Choose an \'{e}tale atlas
	 $
	 U \to \cX
	 $
	by a smooth scheme $U$ on which
	 $D\coloneqq \cD \times_{\cX} U$ is a simple normal crossing divisor. 
	 Denote by
	 $D= \bigcup_{i \in I} D_{i}$ the decomposition of 
$D$ into its irreducible components. 
	 Then
	 $
	 D^{k}\coloneqq \cD^k \times_{\cX} U
	 $
	 can be written as 
	 \[
	 D^{k} = \coprod_{i_1 < \cdots < i_k} D_{ i_1 \dots i_k},  
	 \] 
	 where $D_{ i_1 \dots i_k}\coloneqq D_{ i_1} \cap \cdots \cap D_{ i_{k}}$. 
	 Let $\lambda_{j,k} \colon D_{ i_1 \dots i_{k+1}} \to D_{ i_1\dots i_{j-1} i_{j+1}\dots i_{k+1} }$ 
	 be the natural closed immersion. 
	Then we define  
	\[
	r^U_{k} \colon (\varphi^{k}_U)_* \Omega^i_{D^{k}}(\cE_{D^{k}}) \to 
	(\varphi^{k+1}_U)_* \Omega^i_{D^{k+1}}(\cE_{D^{k+1}}) 
	\]
	as the sum over
	$
	j = 1, 2, \dots, k+1
	$
	of the homomorphisms
	\[
	\lambda_{j,k}^* \otimes (e_{D_{i_j}} \wedge ) \colon 
	\lambda_{j,k}^* \Omega^i_{D_{ i_1\dots i_{j-1} i_{j+1}\dots i_{k+1} }} (\cE_{D_{ i_1\dots i_{j-1} i_{j+1}\dots i_{k+1} }}) 
	\to \Omega^i_{D_{ i_1 \dots i_{k+1} }} (\cE_{D_{ i_1\dots i_{k+1} }}), 
	\]
	where $e_{D_{i_j}} \in \cF_{D^k}$ is the element corresponding to the irreducible component $D_{i_j}$ of $D$. 
	
	We can check that $p_1^* r^U_k = p_2^* r^U_k$ holds for the projections 
	$p_i \colon U \times_{\cX} U \to U$ by the construction of $\cE_{\cD^k}$. 
	Hence we see that the homomorphism $r_k \colon \varphi^{k}_* \Omega_{\cD^{k}}^p(\cE_{\cD^{k}}) 
	\to \varphi^{k+1}_* \Omega_{\cD^{k+1}}^p(\cE_{\cD^{k+1}})$
	is well-defined, so as to obtain the sequence \eqref{eq:res_for_log_differential_forms}. The exactness follows from
	that on the atlas, which is standard
	(cf. \cite[p. 40]{Fujino:2009mz}). 
	
	Finally we obtain \eqref{eq:res_for_log_vector_fields} from \eqref{eq:res_for_log_differential_forms} via the isomorphism
	\(
		\Omega _{ \cX } ^{ n - p } ( \log \cD )
		\simeq
		\Theta _{ \cX } ^{ p } ( - \log \cD ) \otimes \omega _{ X } ( \cD )
	\),
	which follows from the standard	perfect pairing
	\begin{equation}\label{eq:perfect_pairing}
	\Omega^p_{\cX}( \log \cD )
	\times
	\Omega^{n-p}_{\cX}( \log \cD )
	\to
	\omega_{\cX} ( \cD ).
	\end{equation}
	\end{proof}

	\begin{lemma}\label{lemma:spectral_sequence}
	Let
	\begin{equation}
	F^{\bullet} = ( \cdots \to 0 \to F^0 \to F^1 \to \cdots )
	\end{equation}
	be a complex of sheaves on a Deligne--Mumford stack
	$
	\cX
	$.
	Then there exists the following spectral sequence.
	\begin{equation}\label{eq:spectral_sequence}
	E^{ p , q}_1 = H^q ( \cX , F^p )
	\Rightarrow
	E^{p+q} = \bH^{p+q} ( \cX , F^{\bullet} )
	\end{equation}
	\end{lemma}
	
\begin{proof}
This is standard. For example, see \cite[Lemma-Definition A.46]{MR2393625}. 
\end{proof}

We need the following variant of \cite[Theorem 2.1]{MR894379} for a Deligne--Mumford stack. 

\begin{lemma}\label{lemma:log_Frobenius_decomp}
	Let
	$
	( \cX , \cD )
	$
	be a normal crossing pair of Deligne--Mumford stacks over a perfect field
	$\bfk$ of characteristic $p>0$. 
	Let $S\coloneqq \Spec \bfk$ and
	$
	F = F_{X/S} \colon \cX \to \cX'\coloneqq \cX \times_{S, F_S} S
	$
	be the relative Frobenius morphism as explained in
	\cite[Section 1.1]{MR3060750}. 
	Let $W_2(\bfk)$ be the ring of truncated Witt vectors and
	$
	\Stilde\coloneqq \Spec W_2(\bfk)
	$. 
	
\begin{enumerate}[(i)]
\item There is a unique isomorphism of graded $\cO_{\cX'}$-algebras
	\[
	C^{-1} \colon \bigoplus_{i \ge 0} \Omega^i_{\cX'/S}(\log \cD')
	\to
	\bigoplus_{i \ge 0} \cH^i (F_* \Omega^{\bullet}_{\cX /S}(\log \cD) )
	\] 
	such that $C^{-1}(d(x \otimes 1)) = x^{p-1} dx$. 
\item Assume that there exists a lift $(\cXtilde, \cDtilde)$ of $(\cX, \cD)$ over $\Stilde$. 
	Then there is an associated isomorphism 
	\[
	\varphi \colon \bigoplus_{i<p} \Omega^i_{\cX'/S}(\log \cD')
	\to
	\tau_{<p} F_* \Omega^{\bullet}_{\cX/S}(\log \cD)
	\]
	in the derived category of $\cO_{\cX'}$-modules such that $\cH^i(\varphi) = C^{-1}$ for $i <p$. 
\end{enumerate}
\end{lemma}

\begin{proof}
	The first claim is the logarithmic version of	
	\cite[Corollary 2.2]{MR3060750}. The original proof literally works if we replace the Cartier isomorphism with its logarithmic version due to Katz \cite[(7.2.4)]{MR0291177}.
	
	The second claim for schemes is in
	\cite[4.2.3]{MR894379}. 
	With a little more care, as we explain next,
	the proof works for Deligne--Mumford stacks as well.
	Below we use the symbols of \cite[Section 1.1]{MR3060750}.
	
	Consider an \'etale cover
	$
	\cU = \{ U_i \}
	$
	of the stack $\cX$ such that $U_i$ are affine.
	Let $D_i\coloneqq \cD|_{U_i}$ and $(\tilde{U}_i, \tilde{D}_i)$ a lift of $(U_i, D_i)$ over $\Stilde$ 
	induced by $(\cXtilde, \cDtilde)$. 
	Since
	$
	U'_i
	$
	is affine, as explained in
	\cite[Example 10.3]{MR1193913},
	we obtain a lift
	$\Ftilde_i
	\coloneqq
	\Ftilde_{U_i/S}$
	of
	$F_{U_i/S}$
	satisfying
	${\Ftilde_i}^{-1}({\widetilde{D}}'_i)=p \widetilde{D}_i$.
	Thus we obtain the morphisms
	$f_i \colon \Omega^1_{U'_i/S} (\log D'_i) 
	\to \lb F_{U_i/S} \rb_* \Omega^1_{U_i/S}(\log D_i)$. 
	Now, as pointed out in \cite[Remark 1.4]{MR3060750},
	the relative Frobenii commute with \'etale morphisms.
	This ensures that the homotopy maps
	``$
	h_{ij}
	=
	\Ftilde_j^* - \Ftilde_i^*$"
	relating $f_j$ and $f_i$
	are well defined.
	The rest of the construction of $\varphi$ works without change.
	\end{proof}

\begin{lemma}\label{lemma:log_hodge_decomposition}
	Let $
	( \cX , \cD )
	$
	be a normal crossing pair of Deligne--Mumford stacks over $\bC$. 
	Then the spectral sequence 
	\[
	E_1^{p,q}
	\coloneqq
	H^q(\cX, \Omega^p_{\cX}(\log \cD))
	\Rightarrow 
	\bH^{p+q}(\cX, \Omega^{\bullet}_{\cX}(\log \cD))
	\simeq
	H^{p+q} \lb (\cX \setminus \cD)^{\an}, \bC \rb
	\]
	degenerates at $E_1$.  As a consequence, we have an isomorphism
	\begin{equation}
	H^i ( (\cX \setminus \cD)^{\an}, \bC )
	\simeq
	\bigoplus_{q+p = i}
	H^q ( \cX , \Omega_{\cX}^p ( \log \cD ))
	\end{equation}
	for any
	$
	i \ge 0
	$.
\end{lemma}

\begin{proof}
When $\cX$ is a scheme, this is proved in \cite[Corollaire 4.2.4]{MR894379}. 
We can generalize this to the case where $\cX$ is a Deligne--Mumford stack by \cref{lemma:log_Frobenius_decomp} and 
arguments as in \cite[Corollary 1.7]{MR3060750}
and the proof of \cite[Lemma 1.9]{MR3060750}.
\end{proof}

As an application we obtain the stacky and
logarithmic generalization of the
Kodaira--Akizuki--Nakano vanishing theorem.

\begin{definition-proposition}
Let
$
\cX
$
be a Deligne--Mumford stack
which admits a coarse moduli scheme
$
p \colon
\cX \to X
$.
Then for any line bundle
$
\cL
$
on
$
\cX
$,
there exists a positive integer
$
N
$
such that
$
\cL^{\otimes N} \simeq p^* L
$
holds for some line bundle
$
L
$
on
$
X
$.

We say that
$
\cL
$
has a \emph{property
$
\cP
$}
if the descent
$
L
$
as above has the property
$
\cP
$; here
$
\cP
$
is a property of line bundles on schemes
which depends only on the isomorphism classes of
$\bQ$-line bundles, such as ampleness.
\end{definition-proposition}

\begin{theorem}\label{theorem:logarithmic_KAN}
	Let
	$
	\cX
	$
	be a smooth proper Deligne--Mumford stack of dimension
	$d$
	over a perfect field
		$\bfk$ of characteristic $p \ge 0$,
	$
	\cD
	$
	a normal crossing divisor on
	$
	\cX
	$,
	and
	$\cL$ an ample line bundle on
	$
	\cX
	$. 
	Then we have
	\begin{equation}
	 H^i(\cX, \Omega^j_\cX (\log \cD)(-\cD) \otimes \cL) =0 
	\end{equation}
	if $i +j > \bfd(d,p)$,
	where
	\begin{equation}
	\bfd(d,p) \coloneqq
	\begin{cases}
	\sup (d, 2d-p) & p > 0,
	\\
	d & p = 0.
	\end{cases}
	\end{equation}
\end{theorem}

\begin{proof}
	\begin{step}\label{step:no_boundary}
	Let us first consider the case
	$
	\cD = \emptyset
	$.
Since we have \cref{lemma:log_Frobenius_decomp}, we can prove the statement 
by the same arguments as in the proof of \cite[Corollaire 2.8 (i)]{MR894379} when $p >0$.  

We can also prove the statement for $p=0$
by the same arguments as in the proof of \cite[Corollaire 2.11]{MR894379}. 
Note that there exists an integral domain $A$ of finite type over $\bZ$, an injective homomorphism $A \hookrightarrow \bfk$, and 
a smooth proper Deligne--Mumford stack $\cY$ over $A$ which pulls back to $\cX$ over $\bfk$ as explained in \cite[Corollary 1.7]{MR3060750}.

		\end{step}
	
	\begin{step}
	Applying
		\cref{lemma:spectral_sequence}
		to the exact sequence
		\eqref{eq:res_for_log_differential_forms}
		tensored with
		$
		\cL
		$,
		we obtain the spectral sequence
		\[
		E_1^{s,t} = H^t(\cD^{s}, \Omega^j_{\cD^{s}}(\cE_{\cD^s}) \otimes (\varphi^s)^*\cL) \Rightarrow 
		E^{s+t}
		=
		H^{s+t}(\cX, \Omega^j_{\cX}(\log \cD)(-\cD) \otimes \cL).
		\]
		Then for each $(s, t)$ such that $s+t =i$, we have
		$
		E_1^{s,t} =0
		$.
		This follows from \cref{step:no_boundary}, since
		$
		\cE_{\cD^s} \otimes (\varphi^s)^*\cL
		$
		is ample and  
		$
		j+t = j+i-s > \bfd(d,p) - s \ge \bfd(\dim \cD^{s}, p)
		$.
		Thus we obtain
		$
		E^i
		=
		0
		$
		to conclude the proof. 
	\end{step}
	\end{proof}

\section{The classification of twisted sectors of codimension $\le 2$}
\label{section:The_classification_of_twisted_sectors_of_codimension_at_most_2}
In this section we classify twisted sectors
$
 W \subseteq I \scMbar _{ g, n }
$
such that
$
 c_W \le 2
$.
For that purpose, we first start with the classification of those with
$
 F ( W ) \cap \scM _{ g, n } \neq \emptyset
$, where
$
 F \colon I \scMbar _{ g, n } \to \scMbar _{ g, n }
$
is the canonical map. The classification of such
$
 W
$
amounts to that of $n$-pointed smooth projective curves equipped with
nontrivial automorphisms. For each prime number
$
 \ell
$ and positive integer $k$, 
we classify families of smooth projective curves of genus
$
 g \ge 2
$
and (effective) actions of the group
$
 G = \bZ / \ell^k \bZ
$.
Since Pagani has already classified 
the twisted sectors of $\cM_2$ in \cite[p.1652]{MR2871153},
we may and will assume that $g \ge 3$ after the following two remarks. 

\begin{remark}\label{remark:automorphism_of_curves_with_genus_at_most_two}
Here we discuss automorphisms of smooth stable pointed curves of genus $\le 1$.

A stable pointed curve of genus $0$ never has a nontrivial automorphism (even singular ones).

Any elliptic curve, namely, a smooth projective curve $E$ with a marked point $o$, has a unique involution of order two. We will call it the \emph{negation} of $( E, o )$.

There are exactly two isomorphism classes of elliptic curves with extra automorphisms.
One of them is
$
 \lb E _{ 1728 } \coloneqq V \lb z y ^{ 2 } - x ^{ 3 } + x z ^{ 2 } \rb, ( 0 : 0 : 1 ) \rb
$,
with the complex multiplication automorphism of order four given by
$
 \sigma _{ 1728 } \colon \lb x : y : z \rb \mapsto \lb - x : \sqrt{ - 1 } y : z \rb
$.

The other one is the Fermat cubic curve
$
 \lb E _{ 0 } \coloneqq V \lb x ^{ 3 } + y ^{ 3 } + z ^{ 3 } \rb, ( 1 : - 1 : 0 ) \rb
$,
with the complex multiplication automorphism of order six given by
$
 \sigma _{ 0 } \colon \lb x : y : z \rb \mapsto \lb y : x : \omega z \rb
$,
where
$
 \omega
$
is a primitive third root of unity.\end{remark}

\begin{remark}\label{remark:pagani}
Let us recall from \cite[p.1652]{MR2871153} the classification of twisted sectors $W$ of $\scM_2$ such that 
$c_W \le 2$. 

Each $C \in \scM_2$ has the hyperelliptic involution and it gives the twisted sector $W_0$ with $c_{W_0} =0$. 

There is a unique twisted sector $W_1 \subset I \scM_2$ of  $c_{W_1} =1$ whose general member is a double cover of an elliptic curve equipped with the covering involution.

There are  3 twisted sectors of codimension 2.
The first one is $W_2$, whose general member is a triple cover of $\bP^1$ together with a nontrivial covering transformation.
The second one is $W_3$, whose general member is a $\bZ / 4 \bZ$-cover of $\bP^1$ together with a covering transformation of order 4.
The third one is $W _{ 4 }$, whose general member is a
 $
  \bZ / 6 \bZ
 $-cover of $\bP^1$ together with a covering transformation of order 6.

\end{remark}

Consider a smooth projective curve
$
 C
$
of genus
$
 g \ge 3
$
equipped with an effective action
$
 \bZ / \ell ^{ k } \bZ = G \curvearrowright C
$ for a prime number $\ell$.
Let
$
 \pi \colon C \to C / G \eqqcolon \Cbar
$
be the quotient morphism with the branch points $q_1, \ldots, q_r \in \Cbar$
and set
$
 \gbar = g ( \Cbar )
$.
The curve
$
 C
$
together with the
$
 G
$-action is recovered from the isomorphism class of the marked curve
$
 \lb \Cbar, q _{ 1 }, \dots, q _{ r } \rb
$
up to finite choices, since $G$-covers of $\Cbar$
are in one-to-one correspondence with surjective group homomorphisms from
the finitely generated group
$
 \pi _{ 1 } \lb \Cbar \setminus \lc q _{ 1 }, \dots, q _{ r } \rc \rb
$
to the finite group $G$.
Let us classify quintuples
$
 (g, \gbar, r, \ell, k)
$
which appear this way and such that
\begin{equation}\label{eq:codim}
 c ( g, \gbar, r )
 \coloneqq 
 \dim \scM _{ g } - \dim \scM _{ \gbar, r }
 =
 3 ( g - 1 ) - 3 ( \gbar - 1 ) - r
 \le
 2.
\end{equation}

For each
$
 i = 1, 2, \dots, r
$, we have $\pi^* (q_i) = \ell^{k_i} D_i$ for some $0 < k_i \le k$ and 
a reduced divisor $D_i$ on $C$ of degree $\ell^{k-k_i}$.   
By this and the Hurwitz formula, we obtain 
\begin{align}\label{eq:Hurwitz_formula}
 2(g-1) = 2 \ell^k (\gbar -1) + \sum_{i=1}^r
(\ell^k - \ell^{k-k_i}).
\end{align}

By $\ell^{k-k_i} \le \ell^{k-1}$, we  obtain  
\begin{equation}\label{eq:Hurwitzineq}
2(g-1) \ge 2 \ell^k(\gbar-1) + r \ell^{k-1}(\ell-1).  
\end{equation} 
Combining this with \eqref{eq:codim}, we obtain the inequality
\begin{equation}\label{eq:codimineq}
 3 ( \ell ^{ k } - 1 ) ( \gbar - 1 ) + r \lb \tfrac{3}{2} \ell ^{ k - 1 } ( \ell - 1 ) - 1 \rb
 \le
 2.
\end{equation}
It immediately follows that $\gbar \le 1$. Combining this with \eqref{eq:Hurwitz_formula} and the assumption
$
 g \ge 3
$,
it also follows that
$
 r > 0
$.

We shall next show that $k = 1$.
Suppose for a contradiction $k \ge 2$, and
consider first the case $\gbar=1$.
By \eqref{eq:codimineq}, we have 
\begin{align}\label{eq:inequality_gbar_1}
 2 \ge r \lb \tfrac{3}{2}\ell^{k-1}(\ell -1)-1 \rb
\end{align}
and the only possibility is
$
 ( \ell, k ) = ( 2, 2 )
$, in which case $r=1$. 
Then $c(g, \gbar, r) = 3 g - 4 \le 2$, contradicting the assumption
$g \ge 3$.

Next consider the case $\gbar =0$, so that
$
 6 = 3 ( 3 - 1 ) \le 3 ( g - 1 ) \le 3 ( 0 - 1 ) + r + 2
 \Rightarrow
 7 \le r
$
by \eqref{eq:codim}. 
By \eqref{eq:codimineq}, we have 
\[
2 \ge -3(\ell^k-1) + r\lb \tfrac{3}{2}\ell^{k-1}(\ell-1)-1 \rb
= 
3 \ell^{k-1}\lb \lb \tfrac{1}{2} r - 1 \rb ( \ell - 1 ) - 1 \rb - r +3. 
\]
By the above inequality,
$
 r \ge 7
$,
and $\ell, k \ge 2$, we obtain  
\[
 2 \ge 3 \cdot 2 \lb \tfrac{1}{2} r - 1 - 1 \rb - r + 3 = 2 r - 9,
\]
which contradicts
$
 r \ge 7
$.

\vspace{5mm}

Therefore we may and will assume $k=1$. Since
$
 \ell
$
is prime,
$
 \pi
$
is totally ramified in
$
 r \ge 1
$
points on
$
 C
$
with the ramification index
$
 \ell - 1
$.
\eqref{eq:Hurwitz_formula} specializes to
\begin{equation}\label{eq:Hurwitz}
 2 ( g - 1 ) = 2 \ell ( \gbar -1 ) + r ( \ell - 1 ).
\end{equation}
In particular,
$
 r \ge 2
$
is an even integer if $\ell = 2$.

If
$
 \gbar = 1
$,
then we get
$
 c ( g, \gbar, r ) = \lb \tfrac{3}{2} ( \ell - 1 ) - 1 \rb r
$.
We can check that 
$c ( g, \gbar, r ) \le 2$ 
only if
$
 ( \ell, r ) = ( 2, 2 ), ( 2, 4 ), ( 3, 1 )
$.
The cases
$
 (\ell, r) = (2,2)
$
and
$
 ( 3, 1 )
$
are about the cases when $g = 2$, which we have already discussed in \cref{remark:pagani}.
The case
$
(\ell, r) = (2, 4) 
$
is realized by a family of genus $3$ curves which admits an involution
whose quotient is an elliptic curve. The locus of such curves is of codimension $2$
in
$
 \scM _{ 3 }
$; to count the dimension of the locus, note that such a curve with involution corresponds to a genus $1$ curve with 4 marked points up to finite choices.

Finally, the case
$
 \gbar = 0
$
can be settled by similar arguments. 
By \eqref{eq:codim}, 
we get
\begin{align}
 c ( g, \gbar, r ) = \tfrac{1}{2} ( r - 2 ) ( 3 \ell - 5 ) - 2.
\end{align}
As above, the only possibilities are
$(\ell, r)= (2,6), (2,8), (2,10), (3,4)$.
The first three cases correspond to the hyperelliptic loci of
$
 \scM _{ g }
$
for
$
 g = 2, 3, 4
$,
where the codimension of the locus is
$
 0, 1, 2
$,
respectively. 
The case $(\ell, r)=(3,4)$ occurs on $\scM_2$ and corresponds to curves of genus $2$ 
with an automorphism of order $3$. 
Indeed, as a hyperelliptic curve, such a curve is realized as a double cover of $\bP^1$ 
branched at a set of $6$ points which is preserved by an automorphism 
of $\bP^1$ of order $3$. Such an automorphism must freely act on the set of $6$ points, and
lifts to an automorphism of order three of the double cover.
Such configurations of $6$ points on $\bP^1$ constitutes a codimension $2$ 
locus on $\scM_{0,6}$; namely, the locus is of dimension
\(
   1
\). To see this, take a collection of three marked points in one orbit and send them to
\(
   0,
   1
\),
and
\(
   \infty
\)
by one of the six automorphisms of $\bP^1$. The remaining three marked points constitute an orbit, so we are left with one-dimensional freedom to choose them.

\begin{table}[t]
\begin{align}
\begin{array}{cl}
\toprule
c(g, \gbar, r) & (g, \gbar; \ell^k, r)\\
\midrule
0 & 
\\
1 &
(3,0;2,8)\\
2 &
(3,1;2,4), (4,0; 2,10)\\
\bottomrule
\end{array}
\end{align}
\caption{possible values for $(g, \gbar; \ell^k, r)$ with
$
 g \ge 3
$}
\label{table:table_of_twisted_sectors_of_smooth_curves}
\end{table}

\begin{summary}\label{summary:classify_quintuples}
The possible values for $(g, \gbar; \ell^k, r)$ with
$
 g \ge 3
$
are summarized in \cref{table:table_of_twisted_sectors_of_smooth_curves}.
Combining this with
\cref{remark:automorphism_of_curves_with_genus_at_most_two,remark:pagani}, we obtain the classification of twisted sectors $W$ with
$
 c _{ W } \le 2
$
of
$
 \scM _{ g, n }
$
for any
$
 \lb g, n \rb
$.
They are listed in
\cref{table:table_of_twisted_sectors_of_smooth_pointed_curves_codim0},
\cref{table:table_of_twisted_sectors_of_smooth_pointed_curves_codim1}, and
\cref{table:table_of_twisted_sectors_of_smooth_pointed_curves_codim2},
according to the value of $c_W$.
\end{summary}

\begin{table}[h]
\begin{align}
\begin{array}{cc}
\toprule
 ( g, n ) & \mbox{pointed curve, automorphism}\\
\midrule
( 2, 0 ) & \mbox{genus 2 curve, HE involution}\\
( 1, 1 ) & \mbox{elliptic curve, negation}\\
\bottomrule
\end{array}
\end{align}
\caption{$
 c _{ W } = 0
$
}
    \label{table:table_of_twisted_sectors_of_smooth_pointed_curves_codim0}
\end{table}

\begin{table}[h]
\begin{align}
\begin{array}{cc}
\toprule
 ( g, n ) & \mbox{pointed curve, automorphism}\\
\midrule
( 3, 0 ) & \mbox{HE genus 3 curve, HE involution}\\
( 2, 1 ) & \mbox{genus 2 curve with a Weierstrass marked point, HE involution}\\
( 2, 0 ) & \mbox{bielliptic genus 2 curve, covering involution}\\
( 1, 2 ) & \mbox{elliptic curve with a 2-torsion marked point, negation}\\
( 1, 1 ) & \mbox{$E _{ 1728 }$ or $E _{ 0 }$, complex multiplication}\\
\bottomrule
\end{array}
\end{align}
\caption{$c_W =1$}
    \label{table:table_of_twisted_sectors_of_smooth_pointed_curves_codim1}
\end{table}

\begin{table}[h]
\begin{align}
\begin{array}{cc}
\toprule
 ( g, n ) & \mbox{pointed curve, automorphism}\\
\midrule
( 4, 0 ) & \mbox{HE genus 4 curve, HE involution}\\
( 3, 1 ) & \mbox{HE genus 3 curve with a Weierstrass marked point, HE involution}\\
( 3, 0 ) & \mbox{bielliptic genus 3 curve, covering involution}\\
( 2, 2 ) & \mbox{genus 2 curve with 2 Weierstrass marked points, involution}\\
( 2, 1 ) & \mbox{bielliptic genus 2 curve with a ramification marked point, covering involution}\\
( 2, 0 ) & \mbox{$\bZ / 3 \bZ$ cover of $\bP ^{ 1 }$, covering transformation}\\
( 2, 0 ) & \mbox{$\bZ / 4 \bZ$ cover of $\bP ^{ 1 }$, covering transformation of order 4}\\
( 2, 0 ) & \mbox{$\bZ / 6 \bZ$ cover of $\bP ^{ 1 }$, covering transformation of order 6}\\
( 1, 3 ) & \mbox{elliptic curve with two 2-torsion marked points, negation}\\
( 1, 2 ) & \mbox{$E _{ 1728 }$ or $E _{ 0 }$ with a fixed marked point, complex multiplication}\\
\bottomrule
\end{array}
\end{align}
\caption{$c_W=2$}
    \label{table:table_of_twisted_sectors_of_smooth_pointed_curves_codim2}
\end{table}

Let us begin the classification of twisted sectors $W$ with
$ c _{ W } \le 2
$
of
$
 I \scMbar _{ g, n }
$.
The case
\(
   c _{ W } = 0
\)
is immediate:
\begin{proposition}\label{proposition:classification_of_twisted_sectors_of_codim_0}
Twisted sectors
$
 W \subset I \scMbar _{ g, n }
$
with
$
 c_W = 0
$
are classified as follows.

\begin{enumerate}
\item
$
 ( g, n ) = ( 2, 0 )
$
and
$
 W
$
represents the pairs consisting of a stable genus 2 curve and its hyperelliptic involution.
In this case
$
 F \colon W \to \scMbar _{ 2, 0 }
$
is an isomorphism.

\item
$
 ( g, n ) = ( 1, 1 )
$
and
$
 W
$
represents the pairs consisting of a stable pointed genus 1 curve and the negation with respect to the group structure whose origin is the marked point.
In this case
$
 F \colon W \to \scMbar _{ 1, 1 }
$
is an isomorphism.
\end{enumerate}
\end{proposition}

Next we classify twisted sectors $W$ with
$ c _{ W } = 1
$.
The key point is the following:
\begin{proposition}\label{proposition:study_of_W_of_codim_1}
If a twisted sector
$
 W \subset I \scMbar _{ g, n }
$
of
$
 c _{ W } = 1
$
satisfies
$
 F ( W ) \cap \scM _{ g, n } = \emptyset
$,
then
$
 F ( W ) = \cB _{ 1 : \emptyset }
$.
\end{proposition}

\begin{proof}
There are four possibilities for
$
 W
$
as follows.
See \cref{definition:boundarydivisors} for the definition of
$
 \cB _{ j : S }
$
and so on. We have the following 3 cases: 

\begin{enumerate}[(i)]
\item
$
 F \lb W \rb
$
coincides with the boundary divisor
$
 \cB _{ j : S }
$, and the automorphisms represented by
$
 W
$
do not exchange the two irreducible components of the curves.

\item
$
 F \lb W \rb
$
coincides with the boundary divisor
$
 \cB _{ j : S }
$, and the automorphisms represented by
$
 W
$
exchange the irreducible components of the curves.

\item
$
 F \lb W \rb
$
coincides with the boundary divisor
$
 \cB _{ 0 }
$.

\end{enumerate}

We investigate each case separately.

\begin{enumerate}[(i)]
\item
Since the normalization morphism
$
 \nu \colon \cB _{ j: S } ^{ \nu } \to \cB _{ j: S }
$
is birational,
$
 W
$
is dominated by a connected component
$
 W ' \subset I \lb \cB _{ j: S } ^{ \nu } \rb
$
of codimension 0 under the morphism
$
 I \nu
$.

Recall that there is an \'etale finite morphism
$
 \rho \colon \scMbar _{ j, S \cup \lc n + 1 \rc } \times \scMbar _{ g - j, \lb [ n ] \setminus S \rb \cup \lc n + 2 \rc }
 \to \cB _{ j: S } ^{ \nu }
$,
which in fact is an isomorphism except when
$
 2 j = g
$
and
$
 n = 0
$, in which case the degree of the morphism is 2.
Since we consider the case when the general point of
$
 W
$
(and hence $ W' $) represents an automorphism which does not exchange the irreducible components of the curve,
it is dominated by a connected component $\Wtilde$ of codimension $0$ of
$
 I \lb \scMbar _{ j, S \cup \lc n + 1 \rc } \times \scMbar _{ g - j, \lb [ n ] \setminus S \rb \cup \lc n + 2 \rc } \rb
$.
By \cref{lemma:inertia_commutes_with_product},
$\Wtilde$ is a connected component of codimension $0$ of the stack
$
 I \scMbar _{ j, S \cup \lc n + 1 \rc }
 \times
 I \scMbar _{ g - j, \lb [ n ] \setminus S \rb \cup \lc n + 2 \rc }
 \setminus
 \scMbar _{ j, S \cup \lc n + 1 \rc }
 \times
 \scMbar _{ g - j, \lb [ n ] \setminus S \rb \cup \lc n + 2 \rc }
$.

As we saw in \cref{proposition:classification_of_twisted_sectors_of_codim_0},
$
 ( g, n ) = ( 1, 1 )
$
is the only pair with
$
 n \ge 1
$
such that
$
 \scMbar _{ g, n }
$
admits a twisted sector of codimension 0.
Thus we see that the general member of
$
 W
$
is a union of a smooth elliptic curve $C '$ and a $n$-pointed smooth curve $C ''$ of genus $g - 1$ meeting in a point, equipped with the negation of $C '$ (glued with the identity automorphism of $C ''$). Note that when
$
 ( g, n ) = ( 2, 0 )
$
we have a family of curves
$
 C ' \cup C ''
$,
where
$
 C '
$
and
$
 C ''
$
are curves of genus $1$, equipped with the negations on both of the components. This does not contribute a codimension $1$ twisted sector of
$
 \scMbar _{ 2, 0 }
$,
since the family is the limit of the family of smooth genus $2$ curves equipped with the HE involutions.

\item
This case never occurs (under the assumption
$
 c _{ W } = 1
$). In fact this can happen only when $n = 0$ and
$
 g = 2 j
$
for some integer $j$, but the codimension of the diagonal
\begin{align}
 \scMbar _{ j, 1 } \hookrightarrow \scMbar _{ j, 1 } \times \scMbar _{ j, 1 }
\end{align}
is always strictly positive for any
$
 j
$
such that
$
 \scMbar _{ j, 1 }
$
is a Deligne--Mumford stack.

\item
We prove by contradiction that this case never occurs.
If
$
 \cB _{ 0 } = F ( W )
$
holds for a twisted sector $W$, then any member of
$
 \cB _{ 0 }
$
admits a nontrivial automorphism.
Taking the normalization of the members of
$
 \cB _{ 0 }
$,
we see that any $n$-pointed smooth curve of genus
$
 g - 1
$
and a pair of points on it, there exists an automorphism of the curve which swaps the pair of points.
Hence the only possibility for
$
 ( g, n )
$
are
$
 ( 1, 1 ), ( 2, 0 )
$.
In each of these cases, one can check that the nontrivial automorphisms of the general member of
$
 \cB _{ 0 }
$
is a degeneration of the HE involutions of the smooth members, thereby obtaining a contradiction.
\end{enumerate}
\end{proof}

\begin{summary}
\label{summary:classification_of_twisted_sectors_of_codim_1}
Twisted sectors
$
 W \subset I \scMbar _{ g, n }
$
with
$
 c_W = 1
$
are classified as follows.

There is a twisted sector of common type whose generic member is the union of a smooth elliptic curve $C '$ and
a $n$-pointed smooth curve $C ''$ of genus $g-1$ meeting in a point, equipped with the negation of $C '$ (glued with the identity automorphism of $C ''$).

\begin{enumerate}
 \item\label{it:classification_of_twisted_sectors_of_codim_2_generic}
 If either
 $
  g \ge 4
 $,
 $
  g = 3
 $
 and
 $
  n \ge 1
 $,
 $
  g = 2
 $
 and
 $
  n \ge 2
 $, or
 $
  g = 1
 $
 and
 $
  n \ge 3
 $, then there is no twisted sector other than the common one.

 \item\label{it:classification_of_twisted_sectors_of_codim_1_(3,0)}
 For
 $
  ( g, n ) = ( 3, 0 )
 $,
 other than the common one, there exists one more twisted sector
 \(
    W _{ \mathrm{HE} }
 \)
  whose general member is a HE curve equipped with the HE involution.

 \item\label{it:classification_of_twisted_sectors_of_codim_1_(2,1)}
 For
 $
  ( g, n ) = ( 2, 1 )
 $,
 other than the common one, there exists one more twisted sector
 \(
    W _{ \mathrm{HE} }
 \)
  whose general member is a curve marked by a Weierstrass point equipped with the HE involution.

 \item\label{it:classification_of_twisted_sectors_of_codim_1_(2,0)}
 For
 $
  ( g, n ) = ( 2, 0 )
 $,
 other than the common one, there exists one more twisted sector
 \(
    W _{ \mathrm{BE} }
 \)
  whose general member is a bielliptic curve equipped with the covering involution.

 \item\label{it:classification_of_twisted_sectors_of_codim_1_(1,2)}
  For
 $
  ( g, n ) = ( 1, 2 )
 $,
 other than
 $
  \cB _{ 1 : \emptyset }
 $, 
 there exists one more twisted sector
 $
  W _{ \mathrm{NE} }
 $
 such that
 $
  W _{ \mathrm{NE} } \cap F ^{ - 1 } \lb \scM _{ 1, 2 } \rb
 $
 classifies a pointed curve
 $
  ( C, o, p )
 $
 such that
 $
  p
 $
 is a
 $
  2
 $-torsion point with respect to the origin
 $
  o
 $, equipped with the involution with respect to $o$
 (cf. \cite[Corollary 3.14]{MR3137360}).

 \item\label{it:classification_of_twisted_sectors_of_codim_1_(1,1)}
  For
 $
  ( g, n ) = ( 1, 1 )
 $,
 there exist $ 2 \cdot ( 1 + 2 ) $ twisted sectors
 \(
    W _{ \mathrm{CM} }
 \)
  of codimension
 $
  1
 $
 corresponding respectively to the CM curves
$
 E _{ 1728 }
$
and
$
 E _{ 0 }
$
equipped with CM automorphisms (cf. \cite[Corollary 3.14]{MR3137360}). Under the well-known isomorphisms
\begin{align}
  \scMbar _{ 1, 1 } \simeq \bP \lb 4, 6 \rb \simeq \bP \lb 2, 3 \rb \times
  \bB \mu _{ 2 },
\end{align}
these twisted sectors arise from the two stacky points of
$
 \bP \lb 2, 3 \rb
$.

 \end{enumerate}
\end{summary}

\cref{table:classification_of_twisted_sectors_of_codim_1} below may be of use.

\begin{table}[h]
\begin{align}
\begin{array}{cccccc}
 \toprule
 g \backslash n & 0 & 1 & 2 & 3 & \ge 4\\
 \midrule
 1 & NA & \eqref{it:classification_of_twisted_sectors_of_codim_1_(1,1)}
 & \eqref{it:classification_of_twisted_sectors_of_codim_1_(1,2)}
 & \eqref{it:classification_of_twisted_sectors_of_codim_2_generic}
 & \eqref{it:classification_of_twisted_sectors_of_codim_2_generic}\\
 2 & \eqref{it:classification_of_twisted_sectors_of_codim_1_(2,0)}
 & \eqref{it:classification_of_twisted_sectors_of_codim_1_(2,1)}
 & \eqref{it:classification_of_twisted_sectors_of_codim_2_generic}
 & \eqref{it:classification_of_twisted_sectors_of_codim_2_generic}
 & \eqref{it:classification_of_twisted_sectors_of_codim_2_generic}\\
 3 & \eqref{it:classification_of_twisted_sectors_of_codim_1_(3,0)}
 & \eqref{it:classification_of_twisted_sectors_of_codim_2_generic}
 & \eqref{it:classification_of_twisted_sectors_of_codim_2_generic}
 & \eqref{it:classification_of_twisted_sectors_of_codim_2_generic}
 & \eqref{it:classification_of_twisted_sectors_of_codim_2_generic}\\
 \ge 4
 & \eqref{it:classification_of_twisted_sectors_of_codim_2_generic}
 & \eqref{it:classification_of_twisted_sectors_of_codim_2_generic}
 & \eqref{it:classification_of_twisted_sectors_of_codim_2_generic}
 & \eqref{it:classification_of_twisted_sectors_of_codim_2_generic}
 & \eqref{it:classification_of_twisted_sectors_of_codim_2_generic} \\
 \bottomrule
\end{array}
\end{align}
\caption{Classification of twisted sectors of codimension 1}
\label{table:classification_of_twisted_sectors_of_codim_1}
\end{table}

\begin{proof}
This immediately follows from
\cref{proposition:study_of_W_of_codim_1}
and
\cref{summary:classify_quintuples}
(in particular, \cref{table:table_of_twisted_sectors_of_smooth_pointed_curves_codim1}).
\end{proof}

Below is a summary of what we have done so far, which will be useful for the classification of twisted sectors of codimension $2$:
\begin{summary}\label{summary:for_the_classification_of_tw_sectors_of_codim_2}
Fix an integer
$
 j \ge 0
$
and a finite set
$
 S
$
such that
$
 \scMbar _{ j, S \cup \lc \bullet \rc }
$
is not empty. Then the connected components
$
 \Wtilde \subset I \scMbar _{ j, S \cup \lc \bullet \rc }
$
of codimension
$
 \le 1
$
are classified as in \cref{table:table_of_twisted_sectors_of_pointed_curves_codim0,table:table_of_twisted_sectors_of_pointed_curves_codim1}. 

\begin{table}[h]
\begin{align}
\begin{array}{cccccc}
\toprule
( j, S )
& \mbox{description of the general member}\\
\midrule
\mbox{general}
& \mbox{untwisted sector}\\
( 1, \emptyset )
& \mbox{elliptic curve with the origin $\bullet$, negation}\\
\bottomrule
\end{array}
\end{align}
\caption{Classification of twisted sectors of codimension 0}
    \label{table:table_of_twisted_sectors_of_pointed_curves_codim0}
\end{table}

\begin{table}[h]
\begin{align}
\begin{array}{cc}
\toprule
( j, S )
& \mbox{description of the general member}\\
\midrule
\mbox{general}
& \mbox{$C \in \cB _{ 1: \emptyset }$, negation of the elliptic component}\\
( 2, \emptyset )
& \mbox{smooth curve such that $\bullet$ is a Weierstrass point, HE involution}\\
( 1, | S | = 1 )
& \mbox{elliptic curve with a 2-torsion marked point, negation}\\
( 1, \emptyset )
& \mbox{$ E _{ 1728 }$ or $E _{ 0 }$, CM automorphism}\\
\bottomrule
\end{array}
\end{align}
\caption{Classification of twisted sectors of codimension 1}
    \label{table:table_of_twisted_sectors_of_pointed_curves_codim1}
\end{table}
\end{summary}

Now we classify twisted sectors of codimension 2. Below is the key point for the classification:
\begin{proposition}\label{proposition:study_of_W_of_codim_2}
A twisted sector
$
 W \subset I \scMbar _{ g, n }
$
of
$
 c _{ W } = 2
$
with
$
 F ( W ) \cap \scM _{ g, n } = \emptyset
$
admits a finite surjective morphism from a twisted sector
$
 \Wtilde \subset
 I \lb \scMbar _{ j, S \cup \lc n + 1 \rc } \times \scMbar _{ g - j, \lb [ n ] \setminus S \rb \cup \lc n + 2 \rc } \rb
$
of
$
 c _{ \Wtilde } = 1
$
for some
\(
   0 \le j \le g
\)
and a subset
\(
   S
   \subseteq
   [ n ]
\).
\end{proposition}

\begin{proof}
As in the case
$
 c _{ W } = 1
$,
there are three possibilities for
$
 W
$
as follows.

\begin{enumerate}[(i)]
\item
$
 F \lb W \rb
$
is contained in the boundary divisor
$
 \cB _{ j : S }
$, and the general point of $W$ represents a pointed curve $C$ with an automorphism which does not exchange the irreducible components of
$
 C
$.

\item
$
 F \lb W \rb
$
is contained in the boundary divisor
$
 \cB _{ j : S }
$, and the general point of $W$ represents a pointed curve $C$ with an automorphism which exchanges the irreducible components of
$
 C
$.

\item
$
 F \lb W \rb
$
is not contained in the boundary divisor of the form
$
 \cB _{ j ; S }
$, but is contained in the boundary divisor
$
 \cB _{ 0 }
$.

\end{enumerate}

We investigate each case separately.
\begin{enumerate}[(i)]
\item
By a modular interpretation, one can easily verify that $F(W)$ is dominated by an irreducible closed substack
$
 W ' \subset \cB _{ j: S } ^{ \nu }
$
of codimension $1$, even if
$
 F ( W ) \subset \Sing \cB _{ j: S }
$.
Recall from \cref{lemma:inductive_structure_of_the_boundary} that there is an \'etale finite morphism
\[
 \rho \colon \scMbar _{ j, S \cup \lc n + 1 \rc } \times \scMbar _{ g - j, \lb [ n ] \setminus S \rb \cup \lc n + 2 \rc }
 \to \cB _{ j: S } ^{ \nu }.
 \]
Since we consider the case when the general point of
$
 W
$
(and hence $W '$) represents an automorphism which does not exchange the irreducible components of the curve,
it follows that $W '$ is dominated by an irreducible closed substack
\[
 \Wtilde \subset I \lb \scMbar _{ j, S \cup \lc n + 1 \rc } \times \scMbar _{ g - j, \lb [ n ] \setminus S \rb \cup \lc n + 2 \rc} \rb
\]
of
$
 c _{ \Wtilde } = 1
$.
If
$
 \Wtilde
$
is not a twisted sector,
then it must be contained in a codimension 0 twisted sector (it can not be the untwisted sector, since the general point of $W$ represents a curve with a nontrivial automorphism). By a modular interpretation, this would lead to a contradiction since it would imply that $W$ is contained in a bigger twisted sector. Thus we see that
$
 \Wtilde
$
is a twisted sector of codimension $1$, whose classification is reduced to the that of the connected components of codimension $\le 1$ of
$
 I \scMbar _{ j, S \cup \lc n + 1 \rc }
$
and
$
 I \scMbar _{ g - j, \lb [ n ] \setminus S \rb \cup \lc n + 2 \rc }
$.

\item
This case does not occur as we discuss below.
Suppose that the general member of
$
 W
$
has two irreducible components. This can happen only when $n = 0$ and
$
 g = 2 j
$.
Since the codimension of the diagonal
\begin{align}
 \scMbar _{ j, 1 } \hookrightarrow \scMbar _{ j, 1 } \times \scMbar _{ j, 1 }
\end{align}
is
$
 3 j - 2
$,
the only possibility (under the assumption
$
 c _{ W } = 2
$)
is
$
 j = 1 \iff g = 2$.
This is the case when the generic member of
$
 W
$
represents a genus 2 curve with 2 isomorphic irreducible components equipped with the involution which identifies the two components.
However, such curves with involutions admit deformation to smooth bielliptic curves with involutions, contradicting the assumption
$
 c _{ W } = 2
$. In fact, such a curve is isomorphic to the double cover of a smooth curve $C$ of genus 1 ramified in a point $ o \in C$. This is realized as the cyclic double cover of $C$ obtained from the line bundle
$
 L = \cO _{ C } ( o )
$
and the global section of
$
 L ^{ \otimes 2 }
$
which corresponds to the divisor
$
 2 o
$.
The desired smoothing is obtained by deforming the global section to the ones with only (two) simple zeros.

There is also the possibility that the general member of $W$ consists of three irreducible components, but in this case
$
 F ( W )
$
must coincide with a codimension 2 stratum and one can easily conclude that this case never occurs (under the assumption
$
 c _{ W } = 2
$); in fact, the existence of the automorphism implies that 2 of the 3 irreducible components of the general member of $W$ should be isomorphic to each other, but 
the general member of the stratum do not satisfy this condition. Hence this case never occurs.

\item
We prove that this case never occurs, by a case-by-case analysis.

Suppose that
$
 F ( W )
$
coincides with an irreducible component of
$
 \Sing \cB _{ 0 }
$.
Consider first the case when the general member of $W$ is an irreducible curve $C$ with 2 nodes equipped with $n$ marked points and a nontrivial automorphism. We prove that this case never occurs.

Taking the normalization of $C$, we obtain a family of smooth $n$ pointed curves of genus $g - 2$ equipped with nontrivial automorphisms. An easy dimension computation implies that either the resulting pointed curves are not stable or they constitute a codimension 0 twisted sector of the moduli stack
$
 \scM _{ g - 2, n }
$.
Moreover, if one takes two pairs of points on such a pointed curve, there must be an automorphism which swaps the pairs.
Thus we see that the only possibility is
$
 ( g, n )
 =
 ( 2, 0 )
$. However, such involutions of nodal curves are degenerations of the smooth genus 2 curves with HE involutions. This contradicts the assumption that
$
 W
$
is a connected component of the inertia stack.

Next we consider the case when the general member of $W$ has two irreducible components. Let the genera of the two components be
$
 g '
$
and
$
 g ''
$, respectively, and
$
 S ', S ''
$
be the set of marked points on the respective components (so that
$
 [ n ] = S ' \coprod S ''
$). A moment's reflection will convince one that the assumption
$
 c _{ W } = 2
$
implies that the automorphism of the general member does not exchange the irreducible components.
An easy consideration implies that this occurs if and only if each of
$
 \lb g ', n ' \rb
$
and
$
 \lb g '', n '' \rb
$
is either
$
 ( 0, 1 )
$
or
$
 ( 1, 0 )
$.
In any of the four cases, the automorphism extends to HE involutions of smooth curves; therefore, this does not yield a new twisted sector.

Now suppose that
$
 F ( W ) \not\subset \Sing \cB _{ 0 }
$, so that the general member of $W$ is an irreducible curve $C$ with 1 node equipped with $n$ marked points and a nontrivial automorphism. We prove that this case does not occur.

Taking the normalization
$
 \Ctilde
$
of $C$, we obtain a family of smooth $n$ pointed curves of genus $g - 1$ equipped with nontrivial automorphisms. An easy dimension computation again implies that either the resulting pointed curve is not stable or this yields a codimension 0 twisted sector of the moduli stack
$
 \scM _{ g - 1, n }
$.
Thus we see that the only possibilities are
$
 ( g, n )
 =
 ( 1, 1 ), ( 1, 2 ), ( 2, 0 ), ( 2, 1 ), ( 3, 0 )
$.
The case
$
 ( 1, 1 )
$
is excluded by the dimension reason.
The case
$
 ( 1, 2 )
$
is excluded by the following argument. Let
$
 ( C, p, q )
$
be a general member of
$
 F ( W )
$.
Then
$
 \lb \Ctilde, p, q \rb
$
is equipped with an automorphism $\iota$ which fixes $p$ and $q$, and preserves the set of 2 points lying over the node. Hence
$
 \iota ^{ 2 } = \id _{ \Ctilde }
$,
and it follows that
$
 ( C, p, q )
$
with the involution admit smoothing to smooth genus 1 curves equipped with the negation with respect to one of the 2 marked points such that the other marked point is a 2-torsion point. This contradicts that
$
 W
$
is a connected component of the inertia stack.

The cases
$
 ( 2, 1 ), ( 3, 0 )
$
are excluded, since as in the case
$
 ( 1, 2 )
$
the only possibility in these cases is a family of nodal curves with involution, and they are degenerations of the family of smooth (pointed) curves with the negation and the hyperelliptic involution, respectively.

Finally, the case
$
 ( 2, 0 )
$
is also excluded by the similar reasoning. In fact if
$
 C
$
is the general member of
$
 F ( W )
$
equipped with an automorphism, one can check that it should actually lift to the negation of the normalization
$
 \Ctilde
$
(with respect to an origin).
Then it follows that the involution on
$
 C
$
is nothing but the degeneration of the hyperelliptic involution, which is a contradiction.
\end{enumerate}
\end{proof}

\begin{summary}
\label{summary:classification_of_twisted_sectors_of_codim_2}
Twisted sectors
$
 W \subset I \scMbar _{ g, n }
$
of
$
 c_W = 2
$
are classified as follows.

Note first that there are 4 types of twisted sectors which are of common nature as follows.

\begin{enumerate}[(I)]
 \item\label{it:classification_of_twisted_sectors_of_codim_2_HE}
 A twisted sector
 $
  W _{ \mathrm{I} }
 $
 whose general point represents a smooth curve of genus
 $
  2
 $
 meeting in a Weierstrass point with another smooth projective
 $
  n
 $-pointed curve of genus
 $
  g - 2
 $,
 equipped with the HE involution (glued with the identity automorphism of the other component).

 \item\label{it:classification_of_twisted_sectors_of_codim_2_Elliptic}
  A twisted sector
 $
  W _{ \mathrm{II} }
 $
 whose general point represents a pointed smooth curve of genus
 $
  1
 $
 meeting in a Weierstrass point
 with another smooth projective
 $
  ( n - 1 )
 $-pointed curve of genus
 $
  g - 1
 $,
 equipped with the negation with respect to the marked point (glued with the identity automorphism of the other component).

 \item\label{it:classification_of_twisted_sectors_of_codim_2_CM}
  A twisted sector
 $
  W _{ \mathrm{III} }
 $
 whose general point represents either
 $
  E _{ 1728 }
 $
 or
 $
  E _{ 0 }
 $
meeting in a point with a $n$-pointed smooth projective curve of genus $g-1$, equipped with the CM automorphism (glued with the identity automorphism of the other component).

 \item\label{it:classification_of_twisted_sectors_of_codim_2_3_components}
 A twisted sector
 $
  W _{ \mathrm{IV} }
 $
 whose general point represents a stable curve of the form $\Gamma = E_1 \cup C \cup E_2$,
 where $E_1$ and $E_2$ are smooth genus 1 curves glued to a smooth curve $C$ of genus $g-2$ with $n$-marked points,
 equipped with the simultaneous negation of the elliptic components (identity on the component $C$).
\end{enumerate}

The exact classification of the twisted sectors of codimension $2$ is as follows.

\begin{enumerate}
 \item\label{it:classification_of_twisted_sectors_of_codim_2_generic}
 Assume either
 $
  g \ge 5
 $,
 $
  g \ge 4
 $
 and
 $
  n \ge 1
 $,
 $
  g = 3
 $
 and
 $
  n \ge 2
 $,
 $
  g = 2
 $
 and
 $
  n \ge 3
 $, or
 $
  g = 1
 $
 and
 $
  n \ge 4
 $.
 Then there is no twisted sector of
 $
  c _{ W } = 2
 $
 other than \eqref{it:classification_of_twisted_sectors_of_codim_2_HE},\eqref{it:classification_of_twisted_sectors_of_codim_2_Elliptic}, \eqref{it:classification_of_twisted_sectors_of_codim_2_CM} and \eqref{it:classification_of_twisted_sectors_of_codim_2_3_components}.

 \item\label{it:classification_of_twisted_sectors_of_codim_2_4_0}
 For
 $
  ( g, n ) = ( 4, 0 )
 $,
 there exists one more twisted sector
 $
  W _{ \mathrm{HE} }
 $
 of
 $
  c _{ W _{ \mathrm{HE} } } = 2
 $
 such that a member of
 $
  W _{ \mathrm{HE} } \cap F ^{ - 1 } \lb \scM _{ 4, 0 } \rb
 $
 is a hyperelliptic curve equipped with the hyperelliptic involution.

 \item\label{it:classification_of_twisted_sectors_of_codim_2_3_1}
 For
 $
  ( g, n ) = ( 3, 1 )
 $,
 there exists one more twisted sector
 $
  W _{ \mathrm{HE} }
 $
 of
 $
  c _{ W _{ \mathrm{HE} } } = 2
 $
 such that a general point of
 $
  W _{ \mathrm{HE} } \cap F ^{ - 1 } \lb \scM _{ 3, 1 } \rb
 $
 represents a hyperelliptic curve marked by a Weierstrass point, equipped with the hyperelliptic involution.

 \item\label{it:classification_of_twisted_sectors_of_codim_2_3_0}
 For
 $
  ( g, n ) = ( 3, 0 )
 $,
 there exists one more twisted sector
 $
  W _{ \mathrm{BE} }
 $
 of
 $
  c _{ W _{ \mathrm{BE} } } = 2
 $
 other than
 \eqref{it:classification_of_twisted_sectors_of_codim_2_HE},
 whose general member is a double cover of an elliptic curve
 branched in two points, equipped with the covering involution. 

 \item\label{it:classification_of_twisted_sectors_of_codim_2_2_2}
 For
 $
  ( g, n ) = ( 2, 2 )
 $,
 there exists another twisted sector
 $
  W _{ \mathrm{HE} }
 $
 of
 $
  c _{ W _{ \mathrm{HE} } } = 2
 $
 such that a member of
 $
  W _{ \mathrm{HE} } \cap F ^{ - 1 } \lb \scM _{ 2, 2 } \rb
 $
 is represented by a
 $
  2
 $-pointed hyperelliptic curve such that both of the marked points are
 Weierstrass points.

 \item\label{it:classification_of_twisted_sectors_of_codim_2_2_1}
 For
 $
  ( g, n ) = ( 2, 1 )
 $,
 we have \eqref{it:classification_of_twisted_sectors_of_codim_2_Elliptic}, \eqref{it:classification_of_twisted_sectors_of_codim_2_CM}, \eqref{it:classification_of_twisted_sectors_of_codim_2_3_components} and one more twisted sector
 $
  W _{ \mathrm{BE} }
 $
 of
 $
  c _{ W _{ \mathrm{BE} } } = 2
 $
 such that a member of
 $
  W _{ \mathrm{BE} } \cap F ^{ - 1 } \lb \scM _{ 2, 1 } \rb
 $
 is represented by a double cover of an elliptic curve branched in
 $
  2
 $-points marked by a ramification point, equipped with the covering involution.

 \item\label{it:classification_of_twisted_sectors_of_codim_2_2_0}
 For
 $
  ( g, n ) = ( 2, 0 )
 $,
 there are
 $
  4
 $ types of twisted sectors
 $
  W
 $
 of
 $
  c_W = 2
 $
 other than \eqref{it:classification_of_twisted_sectors_of_codim_2_Elliptic} and \eqref{it:classification_of_twisted_sectors_of_codim_2_CM}
 as follows (cf. \cite[p.63, 64]{pagani2009chen}):
\begin{enumerate}
\item
 A twisted sector
 \(
    W _{ 3 }
 \)
 whose general point represents a triple cover of $\bP^1$ branched at $4$ points, equipped with a
 covering transformation.

\item
 A twisted sector
 \(
    W _{ 4 }
 \)
 whose general point represents a $\bZ / 4$-cover of $\bP^1$ branched at $4$ points, equipped with
 a covering transformation.
 
\item
 A twisted sector
 \(
    W _{ 6 }
 \)
 whose general point represents a $\bZ / 6$-cover of $\bP^1$, equipped with
 a covering transformation.

 \item
 A twisted sector
 \(
    W _{ \mathrm{CM} }
 \)
 whose general point represents a union
 $
  C ' \cup C ''
 $,
 where $C '$ is a CM curve and $C ''$ is a genus one curve, equipped with the CM automorphism and the negation.
 \end{enumerate}

 \item\label{it:classification_of_twisted_sectors_of_codim_2_1_3}
 For
 $
  ( g, n ) = ( 1, 3 )
 $,
 there is another twisted sector
 $
  W _{ \mathrm{NE} }
 $
 of
 $
  c _{ W _{ \mathrm{NE} } } = 2
 $
 other than \eqref{it:classification_of_twisted_sectors_of_codim_2_Elliptic} and \eqref{it:classification_of_twisted_sectors_of_codim_2_CM}.
 a general point of
 $
  W _{ \mathrm{NE} } \cap F ^{ - 1 } \lb \scM _{ 1, 3 } \rb
 $
 is represented by
 $
  (C, o, p, q)
 $,
 where
 $
  (C, o)
 $
 is a smooth elliptic curve and
 $
  p, q
 $
 are $2$-torsion points with respect to the origin $o$, equipped with the negation
 with respect to $o$.

 \item\label{it:classification_of_twisted_sectors_of_codim_2_1_2}
 For
 $
  ( g, n ) = ( 1, 2 )
 $,
other than \eqref{it:classification_of_twisted_sectors_of_codim_2_CM} there exists one more twisted sector
\(
   W _{ \mathrm{CM} }
\)
corresponding to the curves
$
 E _{ 1728 }
$
and
$
 E _{ 0 }
$
with the corresponding CM automorphism, equipped with two marked points which are fixed by the automorphism.
 \end{enumerate}
\end{summary}

\cref{table:The classification of twisted sectors of codimension 2} may be of use.
\begin{table}[h]
\begin{align}
\begin{array}{cccccc}
 \toprule
 g \backslash n & 0 & 1 & 2 & 3 & \ge 4\\
 \midrule
 1 & NA & NA
 & \eqref{it:classification_of_twisted_sectors_of_codim_2_1_2}
 & \eqref{it:classification_of_twisted_sectors_of_codim_2_1_3}
 & \eqref{it:classification_of_twisted_sectors_of_codim_2_generic}\\
 2
 & \eqref{it:classification_of_twisted_sectors_of_codim_2_2_0}
 & \eqref{it:classification_of_twisted_sectors_of_codim_2_2_1}
 & \eqref{it:classification_of_twisted_sectors_of_codim_2_2_2}
 & \eqref{it:classification_of_twisted_sectors_of_codim_2_generic}
 & \eqref{it:classification_of_twisted_sectors_of_codim_2_generic}\\
 3
 & \eqref{it:classification_of_twisted_sectors_of_codim_2_3_0}
 & \eqref{it:classification_of_twisted_sectors_of_codim_2_3_1}
 & \eqref{it:classification_of_twisted_sectors_of_codim_2_generic}
 & \eqref{it:classification_of_twisted_sectors_of_codim_2_generic}
 & \eqref{it:classification_of_twisted_sectors_of_codim_2_generic}\\
 4
 & \eqref{it:classification_of_twisted_sectors_of_codim_2_4_0}
 & \eqref{it:classification_of_twisted_sectors_of_codim_2_generic}
 & \eqref{it:classification_of_twisted_sectors_of_codim_2_generic}
 & \eqref{it:classification_of_twisted_sectors_of_codim_2_generic}
 & \eqref{it:classification_of_twisted_sectors_of_codim_2_generic} \\
 \ge 5
 & \eqref{it:classification_of_twisted_sectors_of_codim_2_generic}
 & \eqref{it:classification_of_twisted_sectors_of_codim_2_generic}
 & \eqref{it:classification_of_twisted_sectors_of_codim_2_generic}
 & \eqref{it:classification_of_twisted_sectors_of_codim_2_generic}
 & \eqref{it:classification_of_twisted_sectors_of_codim_2_generic} \\
 \bottomrule
\end{array}
\end{align}
\caption{The classification of twisted sectors of codimension 2}
\label{table:The classification of twisted sectors of codimension 2}    
\end{table}

\begin{proof}
This follows from
\cref{proposition:study_of_W_of_codim_2},
\cref{summary:for_the_classification_of_tw_sectors_of_codim_2},
and
\cref{summary:classify_quintuples}
(in particular, \cref{table:table_of_twisted_sectors_of_smooth_pointed_curves_codim2}).
\end{proof}

%
%
\section{Proof of the noncommutative rigidity}

In this last section we show
\cref{theorem:main theorem of the paper}, the main result of this paper.
Recall that the dimension of sheaf cohomology  is invariant under base field extensions.
Since the moduli stacks of pointed stable curves are already defined over
$
\bQ
$,
we can and will work over
$
\bC
$
throughout this section. See~\cref{section:introduction} for the structure of this section.

\subsection{The vanishing of \(H ^{ 2 } ( \cO )\)}
\label{section:vanishing_h2O}
In this section we prove
\begin{theorem}\label{theorem:vanishing_of_O}
The vanishing
\begin{equation}\label{eq:H1,2(O)}
H^i ( \MM , \cO_{\MM} ) = 0
\end{equation}
holds for
$
i = 1 , 2
$.
\end{theorem}
The case
$
g = 0
$
is trivial, since
$
\Mbar_{0,n}
$
are rational.
When
$
g = 1
$
and
$
n \le 2
$,
the assertion follows from the rationality of
$
\Mbar_{ 1 , 1 }
$
and
$
\Mbar_{ 1 , 2 }
$
(see \cite[Theorem 2.3]{MR3174737}).
In the rest of this subsection we assume
\begin{equation}\label{eq:technical_assumption}
g
\ge
2
\quad
\mathrm{or}
\quad
[
g=1
\
\mathrm{and}
\ 
n > 2],
\end{equation}
so that we always have
$
d\coloneqq
\dim
\scMbar_{g,n}
=
3g-3+n
\ge
3
$.
By
\cref{lemma:hodge_symmetry}, the claim of
\cref{theorem:vanishing_of_O}
is equivalent to the vanishing
$
H^0 ( \MM , \Omega^i_{\MM} ) = 0
$
for
$
i = 1 , 2
$. Below is the key ingredient of the proof.

\begin{lemma}
\label{lemma:topological_vanishing}
Under our assumption
\eqref{eq:technical_assumption},
$
 H^i (\cM_{g,n} , \bQ ) = 0
$
holds for
$
i=2d-1, 2d-2
$.
\end{lemma}

\begin{proof}
It was proven by Harer
(see
\cite[Chapter 19, Theorem (2.2)]{MR2807457})
that the vanishing
$
 H_i (M_{g,n} , \bQ ) = 0
$
holds when
\begin{equation}\label{eq:range_of_topological_vanishing}
i >
\begin{cases}
 n-3 & ( g = 0)
\\
4g-5 & ( g > 0 , n = 0 )
\\
4g-4 + n & ( g > 0 , n > 0 ).
\end{cases}
\end{equation}
We can easily check that
\eqref{eq:range_of_topological_vanishing}
holds for
$
i=2d-2
$
and hence also for
$
i=2d-1
$,
under our assumption
\eqref{eq:technical_assumption}.
Finally we have
$
H_i (M_{g,n} , \bQ )
\simeq
(H^i (M_{g,n} , \bQ ))^{\vee}
$
(see
\cite[Theorem A1]{MR0440554})
and
$
H^i (M_{g,n} , \bQ )
\simeq
H^i (\scM_{g,n} , \bQ )
$
(see
\cite[Proposition 36]{MR2172499}).
\end{proof}

\begin{proof}[Proof of \cref{theorem:vanishing_of_O}]
Let us deal with the case
$
i = 1
$ first.
Consider the exact sequence
\begin{equation}\label{eq:inductive_exact_sequence}
H^0 ( \MM , \Omega^1_{\MM}( \log \cB ) ( -\cB ) )
\to
H^0 ( \MM , \Omega^1_{\MM} )
\to
H^0 ( \cB^{1} , \Omega^1_{\cB^{1}} )
\end{equation}
obtained from
\eqref{eq:res_for_log_differential_forms}.
We check the vanishing of the first term of
\eqref{eq:inductive_exact_sequence}.
Using Serre duality and the perfect pairing
\eqref{eq:perfect_pairing}, we obtain
\begin{equation}
\begin{split}
H^0 ( \MM , \Omega^i_{\MM} ( \log \cB ) ( - \cB ) )
\simeq
H^0 ( \MM , \wedge^{d-i} \Theta_{\MM} ( - \log \cB ) \otimes \omega_{\MM} )
\\
\simeq
H^d ( \MM , \Omega^{d-i}_{\MM}( \log \cB ) )^{\vee}
\end{split}
\end{equation}
for any
$ i $. The dual of the last term is, by
\cref{lemma:log_hodge_decomposition},
a direct summand of
$
H^{2d-i} ( \cM_{g , n} , \bQ )
$.
As we saw in \cref{lemma:topological_vanishing}, this is always trivial
when
$
i = 1
$.

Next we show the vanishing of the third term of
\eqref{eq:inductive_exact_sequence}
by an induction on 
$
\dim \MM
$,
starting with the case when
$
\dim
\MM
=
3
$.
In the initial case we have
$
\dim
\cB'
=
2
$
and hence can check the assertion by hand,
since then
$
\cB'
$
is always rational.
For the general case, take any connected component
$
\cB'
\subset
\cB^{1}
$
and the finite \'etale cover
$
\pi
\colon
\cC'
\twoheadrightarrow
\cB'
$
of
\cref{lemma:inductive_structure_of_the_boundary}.
Since
$
\pi^*
\Omega^1_{\cB'}
\simeq
\Omega^1_{\cC'}
$,
it is enough to show
$
H^0 ( \cC' , \Omega^1_{\cC'} )
=
0
$.
Suppose
$
\cC' \simeq \scMbar_{g-1, n+2}.
$
Then we can use the induction hypothesis if
$
(g-1,n+2)
$
still satisfies
\eqref{eq:technical_assumption}, and otherwise
we already checked the assertion
in the beginning of this subsection.
If
$
\cC'
\simeq
\scMbar_{g' , n' } \times
\scMbar_{g'' , n'' }
$,
where
$
g' + g'' = g
$
and
$
n' + n'' = n + 2
$,
then by using the K\"unneth formula
we can check the assertion by the similar arguments.
Thus we conclude the proof for the case
$
i
=
1
$.

Finally we consider the case
$
i=2
$. The arguments are essentially the same
as in the case
$
i=1
$.
Consider the exact sequence
\begin{equation}\label{eq:inductive_exact_sequence_2}
	H^0 ( \MM , \Omega^2_{\MM}( \log \cB ) ( -\cB ) )
	\to
	H^0 ( \MM , \Omega^2_{\MM} )
	\to
	H^0 ( \cB^{1} , \Omega^2_{\cB^{1}} )
\end{equation}
obtained from \eqref{eq:res_for_log_differential_forms}.
We prove the vanishing of the middle term by an induction on
\(
	\dim \MM
\).

The triviality of the first term of \eqref{eq:inductive_exact_sequence_2} follows from the same arguments as above.
To show the vanishing of the third term using the induction hypothesis,
we consider the finite \'etale cover
$
\cC'
\twoheadrightarrow
\cB'
$
as before and check the vanishing of
$
 H^0 ( \cC' , \Omega^2_{\cC'} )
$.
As a new ingredient, when
$
\cC'
\simeq
\scMbar'
\times
\scMbar''
$,
we use the K\"unneth formula and need the result for
$
i=1
$
to show the vanishing of the term
\begin{equation}
H^0 ( \cC' , \Omega^1_{\scMbar'} \boxtimes \Omega^1_{\scMbar''} )
\simeq
H^0 ( \scMbar' ,  \Omega^1_{\scMbar'} )
\otimes
H^0 (  \scMbar'' , \Omega^1_{\scMbar''} ).
\end{equation}
Thus we conclude the proof.
\end{proof}

\subsection{No bivector fields, general case}
\label{section:proof_of_no_bivector_on_Mgnbar}

In this subsection we prove the following theorem.
\begin{theorem}\label{theorem:no_bivector_on_Mgnbar}
The vanishing
\begin{equation}\label{eq:H0_Theta_2_of_Mbar_g_n}
H^0 ( \MM , \wedge^2 \Theta_{\MM} ) = 0
\end{equation}
holds if
$
( g , n )
\neq
( 0 , 5 ),
( 1 , 2 )
$.
\end{theorem}

\begin{lemma}\label{lemma:vanishing_for_logarithmic_polyvector_fields}
Let
$
\scL
$
be a nef line bundle on
$
\scMbar_{g,n}
$.
Then
\begin{equation}\label{eq:Hq_Theta_p(-log)}
H^q ( \MM , \wedge^p \Theta_{\MM} (- \log \cB ) \otimes \scL^{-1})
=
0
\end{equation}
holds for any
$
p , q
$
with
$
p > q
$.
\end{lemma}

\begin{proof}
By Serre duality we obtain the following isomorphism.
\begin{equation}
\begin{split}
H^q (
\MM,
\wedge^p \Theta_{\MM} (- \log \cB ) \otimes \scL^{-1})^{\vee}
\simeq
H^{d-q}
\lb
\MM,
\Omega_{\MM}^p ( \log \cB ) (- \cB )
\otimes
\lb
\omega_{\MM } (\cB ) \otimes \scL
\rb
\rb
\end{split}
\end{equation}
Since 
$
\omega_{\MM } (\cB )
\otimes
\cL
$
is always ample by
\cref{proposition:ampleness_of_the_log_canonical_divisor},
by applying
\cref{theorem:logarithmic_KAN},
we see that the cohomology space is zero when
$
(d-q) + p >d
\iff
p > q
$.
\end{proof}

\begin{proof}[Proof of \cref{theorem:no_bivector_on_Mgnbar}]
We will first consider the general case of arbitrary $(p,q)$. 
Applying
\cref{lemma:spectral_sequence}
to
\eqref{eq:res_for_log_vector_fields},
we obtain the following spectral sequence.
\begin{equation}\label{eq:the_key_spectral_sequence}
\begin{split}
E^{s,t}_1
=
H^t
\lb
\cB^{s} , \wedge^{p-s} \Theta_{\cB^{s}}(\cE_{\cB^s})
\otimes
\det (\cN_{\cB^{s}/\MM})
\rb
\Rightarrow
E^{s+t}= H^{s+t}
\lb
\MM ,
\wedge^p \Theta_{\MM}(-\log \cB)
\rb
\end{split}
\end{equation}
Note that
$
E^{0, q}_1
=
H^q ( \scMbar_{g,n} , \wedge^p \Theta_{\scMbar_{g,n}} )
$.

By
\cref{lemma:vanishing_for_logarithmic_polyvector_fields},
if we have the inequality
$
p > q
$,
we immediately see
$
E^q = 0
$.
If moreover we have the vanishings
$
E_1^{1,q}
=
E_1^{2 , q-1}
=
E_1^{3 , q-2}
=
\cdots
=
E_1^{q+1 , 0}
=
0,
$
then we obtain the vanishing of
$E^{0,q}_1$.

In the rest we use this strategy to deal with the case
$
( p , q ) = ( 2 , 0 )
$
under the assumption
$
\dim
\MM
\ge 3
$.
In this case, since we want to show the vanishing of
$
E_1^{0,0}
$,
all we have to show is the vanishing of
\begin{equation}
E_1^{1,0}
=
H^0 ( \cB^{1} , \Theta_{\cB^{1}}(\cE_{\cB^1}) \otimes \cN_{\cB^{1} / \MM } )
=
H^0 ( \cB^{1} , \Theta_{\cB^{1}} \otimes \cN_{\cB^{1} / \MM } ).
\end{equation}
Note that
$
\cE_{\cB^1} \simeq \cO_{\cB^1}
$
as explained in the end of \cref{definition-proposition:orientation}. 

Let
$
\cB'
$
be a connected component of
$
\cB^{1}
$, and consider the finite \'etale Galois cover
$
\pi
\colon
\cC'
\twoheadrightarrow
\cB'
$
as in
\cref{lemma:inductive_structure_of_the_boundary}.
Since we have the inclusion
\begin{equation}
\pi^*
\colon
H^0 ( \cB' , \Theta_{\cB'} \otimes \cN_{\cB' / \MM } )
\hto
H^0 ( \cC' , \Theta_{\cC'} \otimes \pi^*\cN_{\cB' / \MM } ),
\end{equation}
it is enough to show the vanishing of the RHS.

If
$
\cC'
\simeq
\scMbar'
\times
\scMbar''
$,
then by
\cite[Lemma 4.2]{MR2460695}
and the K\"unneth formula,
the RHS is isomorphic to
\begin{equation}\label{eq:kunneth}
\begin{split}
\lb
H^0
 ( \scMbar' , \Theta_{\scMbar'} \otimes \psi_{n+1}^{\vee} )
\otimes
H^0 ( \scMbar'' , \psi_{n+2}^{\vee} )
\rb
\oplus \ 
\lb
H^0 ( \scMbar'' , \Theta_{\scMbar''} \otimes \psi_{n+2}^{\vee} )
\otimes
H^0 ( \scMbar' , \psi_{n+1}^{\vee} )
\rb.
\end{split}
\end{equation}
Since the line bundles
$
\psi_i
$
are nef and big by \cref{lemma:psi_classes_are_nef_and_big},
the space \eqref{eq:kunneth} is trivial when
$
\dim
\scMbar'
$
and
$
\dim
\scMbar''
$
are both positive (cf. \cite[Theorem A.1]{MR2460695}).
Otherwise we can use
\cref{proposition:vanishing_of_H0theta1_anti_nef}
below.

If
$
\cC'
\simeq
\scMbar_{g-1 , n+2}
$,
again by
\cite[Lemma 4.2]{MR2460695},
the RHS is isomorphic to
\begin{equation}
H^0 ( \scMbar_{g-1 , n+2} , \Theta_{\scMbar_{g-1 , n+2}}
\otimes \psi_{n+1}^{\vee}
\otimes
\psi_{n+2}^{\vee} ).
\end{equation}
Since
$
\psi_{n+1}
\otimes
\psi_{n+2}
$
is nef and big by
\cref{lemma:psi_classes_are_nef_and_big},
we can use
\cref{proposition:vanishing_of_H0theta1_anti_nef}
again to conclude the proof.
\end{proof}

\begin{proposition}\label{proposition:vanishing_of_H0theta1_anti_nef}
For any anti-nef line bundle
$
\cL
$,
$
 H^0 ( \MM , \Theta_{\MM} \otimes \cL )
=
0
$
holds if
$
\dim
\MM
\ge 2
$.
\end{proposition}

\begin{proof}
By applying
$
\otimes \cL
$
to
\eqref{eq:res_for_log_vector_fields}
with
$
p = 1
$, we obtain the exact sequence
\begin{equation}
\begin{split}
H^0 ( \MM , \Theta_{\MM} (- \log \cB ) \otimes \cL )
\to
H^0 ( \MM , \Theta_{\MM} \otimes \cL )
\to
H^0 ( \cB^{1} , \cL \otimes \cE_{\cB^1} \otimes \cN_{\cB^{1} / \MM } ).
\end{split}
\end{equation}
Since
$
\cL \otimes \cE_{\cB^1} \otimes \cN_{\cB^1 / \MM}
$
is anti-nef and big
on the finite \'etale cover
$
\cC'
$
of
\cref{lemma:inductive_structure_of_the_boundary}
and
$
\dim
\cB^{1}
\ge 1
$,
the third term has to be trivial.
Finally, the vanishing of the first term
follows from \cref{lemma:vanishing_for_logarithmic_polyvector_fields}.
\end{proof}

\subsubsection{Interlude}
Since \cref{theorem:logarithmic_KAN} is valid over arbitrary perfect fields
and the moduli stacks
$
\scMbar_{g,n}
$
are defined over prime fields, one can easily check that
\cref{theorem:no_bivector_on_Mgnbar} is generalized to
positive characteristics as follows.

\begin{theorem}
Let
$
\bfk
$
be a field of characteristic
$
p > 0
$, and
suppose
$
(g,n)
$
satisfies the inequalities
$
3 \le 3g-3+n \le p.
$
Then
$
H^0(\scMbar_{g,n}, \wedge^2\Theta_{\scMbar_{g,n}})=0
$.
\end{theorem}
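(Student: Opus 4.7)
The plan is to observe that the characteristic-zero proof of \pref{th:no_bivector_on_Mgnbar} transfers verbatim to characteristic $p>0$, with the single caveat that every invocation of \pref{pr:logarithmic_KAN} must take place on a stack of dimension at most $p$, so that the bound $\bfd(d,p)=\sup(d,2d-p)$ collapses to $\bfd(d,p)=d$. The hypothesis $3g-3+n\le p$ is precisely what guarantees this for $\scMbar_{g,n}$ itself, and the inductive structure \pref{lm:inductive_structure_of_the_boundary} keeps the boundary strata in the same good range.

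First, I would reproduce \pref{lm:vanishing_for_logarithmic_polyvector_fields} over $\bfk$: Serre duality plus \pref{pr:logarithmic_KAN} (applied with $d=\dim\MM\le p$, so $\bfd(d,p)=d$) gives the vanishing of $H^q(\MM,\wedge^p\Theta_\MM(-\log\cB)\otimes\cL^{-1})$ for $p>q$ and any nef $\cL$. Next I would run the spectral sequence \pref{eq:the_key_spectral_sequence} with $(p,q)=(2,0)$ exactly as in the proof of \pref{th:no_bivector_on_Mgnbar}: the hypothesis $3g-3+n\ge 3$ guarantees $\dim\MM\ge 3$ and automatically excludes the two-dimensional exceptional cases $(0,5)$ and $(1,2)$, so the reduction collapses to the vanishing of the single edge term
\[
E_1^{1,0}=H^0(\cB^{1},\Theta_{\cB^{1}}\otimes\cN_{\cB^{1}/\MM}).
\]

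To handle $E_1^{1,0}$ I would pull back to the \'etale cover $\cC'\to\cB'$ of \pref{lm:inductive_structure_of_the_boundary}, which is either $\scMbar_{g',n'}\times\scMbar_{g'',n''}$ or $\scMbar_{g-1,n+2}$. Both have dimension $\dim\MM-1\le p-1$, so on each factor the dimension is strictly less than $p$ and \pref{pr:logarithmic_KAN} remains in its good range. Exactly as in the characteristic-zero argument, a K\"unneth decomposition combined with nefness and bigness of the $\psi$-classes (\pref{lm:psi_classes_are_nef_and_big}) kills the cross terms when both factors are positive-dimensional, and the remaining one-factor cases (including the $\scMbar_{g-1,n+2}$ case) are handled by \pref{pr:vanishing_of_H0theta1_anti_nef}. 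That proposition itself uses \pref{lm:vanishing_for_logarithmic_polyvector_fields} once more, but only on a stack of dimension $\le p-1$, so it is still safely in the good range.

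The main work is therefore entirely bookkeeping: one must check that every application of \pref{pr:logarithmic_KAN} occurring in the characteristic-zero argument happens on a stack of dimension at most $p$. Since the recursion in \pref{lm:inductive_structure_of_the_boundary} strictly decreases dimension and the initial stack satisfies $\dim\scMbar_{g,n}\le p$ by hypothesis, this check is automatic. I do not expect any genuinely new ingredient to be required; in particular no Hodge-theoretic or transcendental input is needed, since \pref{pr:logarithmic_KAN} is already formulated for perfect fields of positive characteristic via the Deligne--Illusie splitting \pref{lm:log_Frobenius_decomp}.
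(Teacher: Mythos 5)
Your proposal is correct and takes essentially the same approach as the paper: the paper's proof is the two-sentence observation that $3g-3+n\le p$ guarantees $\bfd(\dim\cB^i,p)=\dim\cB^i$ for every stratum, so the characteristic-zero argument goes through verbatim, which is exactly the bookkeeping you spell out. The only point worth flagging is that \pref{pr:logarithmic_KAN} (via \pref{lm:log_Frobenius_decomp}) is stated over a perfect base, so for a general $\bfk$ of characteristic $p$ one should first note that $\scMbar_{g,n}$ is defined over the prime field $\bF_p$ and that cohomology dimensions are unchanged by base field extension — the paper itself makes this remark in the sentence preceding the theorem, and your proposal glosses over it, though it is routine.
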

\begin{proof}
The inequality
$
3g-3+n \le p
$
ensures the equality
$
\bfd(\dim \cB^i, p)
=
\dim \cB^i
$
for all strata
$
\cB^i
$
of
$
\scMbar_{g,n}
$.
Therefore the arguments in the case of characteristic \(0\)
works without change.
\end{proof}

\subsection{Exceptional untwisted sectors
$
\Mbar_{0,5}
$
and
$
\scMbar_{1,2}
$}\label{section:exceptional_cases}

Recall first that
$
\Mbar_{0,5}
$
is isomorphic to the blowup of
$
\bP^2
$
in four points in general position. This immediately tells us

\begin{lemma}\label{lemma:bivectors_on_M05}
$
\dim H^0 ( \Mbar_{0,5} , \wedge^2 \Theta_{\Mbar_{0,5}})
=
6.
$
\end{lemma}

We next prove
\begin{lemma}\label{lemma:HH3(M05bar)_is_trivial}
$
\HH^3(\Mbar_{0,5})=0
$.
\end{lemma}

\begin{proof}
By the HKR isomorphism, we see that
$
\HH^3(\Mbar_{0,5})
$
is isomorphic to the direct sum
\begin{equation}\label{eq:HKR_for_H3_M05bar}
\begin{split}
H^3 ( \Mbar_{0,5} , \cO_{\Mbar_{0,5}} )
\oplus
H^2 ( \Mbar_{0,5} , \Theta_{\Mbar_{0,5}} )
\oplus
H^1 ( \Mbar_{0,5} , \wedge^2 \Theta_{\Mbar_{0,5}} )
\oplus
H^0 ( \Mbar_{0,5} , \wedge^3 \Theta_{\Mbar_{0,5}} ).
\end{split}
\end{equation}
Since
$
\dim \Mbar_{0,5} = 2,
$
we see
$
H^3 ( \Mbar_{0,5} , \cO_{\Mbar_{0,5}} )
=
0
$
and
$
\wedge^3 \Theta_{\Mbar_{0,5}}
=
0
$.
Moreover, using the canonical isomorphism
$
\Theta_{\Mbar_{0,5}}
\simeq
\Omega^1_{\Mbar_{0,5}} \otimes \omega_{\Mbar_{0,5}}^{-1}
$
and applying
\cref{theorem:logarithmic_KAN}
to the ample line bundle
$
\cL
=
\omega_{\Mbar_{0,5}}^{-1}
$,
we can show the vanishing of the two middle terms of
\eqref{eq:HKR_for_H3_M05bar}.
\end{proof}

Recall next from
\cite[Section 2]{MR3174737}
the description of
$
\scMbar_{1,2}
$
as a global quotient stack.
Consider the smooth variety
\begin{equation}\label{eq:definition_of_atlas_of_X}
X
\coloneqq
Z
( z y^2 - x^3 - a x z^2 - b z^3 )
\subset
\bA^3_{0} \times \bA^2_{0}.
\end{equation}
Here
$
\bA^*_{0}
\coloneqq
\bA^* \setminus \{ 0 \}
$,
$
( x , y , z )
$
are the coordinates of
$
\bA^3
$,
and
$
( a , b )
$
are the coordinates of
$
\bA^2
$.
The torus
$
G
=
\bG_m^2
$
acts on
$
\bA^3_{0} \times \bA^2_{0}
$
with the weights in the table below.

\[
\begin{array}{c|ccccc}
& x & y & z & a & b
\\
\hline
\xi & 1 & 1 & 1 & 0 & 0
\\
\lambda & 2 & 3 & 0 & 4 & 6
\end{array}
\]
Here
$
( \xi , \lambda )
$
are the coordinates of
$
G
$. Note that the defining equation
\begin{equation}
f
\coloneqq
z y^2 - x^3 - a x z^2 - b z^3
\end{equation}
is homogeneous of weight
$
( 3 , 6 )
$.
Then we have a natural isomorphism of stacks
\begin{equation}\label{eq:M1,2_as_global_quotient}
[ X / G ]
\simto
\scMbar_{1,2}.
\end{equation}

Under the canonical equivalence
\begin{equation}\label{eq:coherent_sheaves_as_equivariant_sheaves}
\pi^* \colon \coh [X/G]
\simto
\coh^G X,
\end{equation}
we see the coincidence (as linearized line bundles)
$
\pi^* \omega_{[X/G]}
\simeq
\omega_X
$.

\begin{proposition}\label{proposition:bivectors_on_M12}
$
H^0 ( \scMbar_{1,2} , \wedge^2 \Theta_{\scMbar_{1,2}} )
=
0.
$
\end{proposition}

\begin{proof}
Set
$
U \coloneqq
\bA^3_{0} \times \bA^2_{0}
$
for simplicity.
By the adjunction formula, we see
\begin{equation}
\omega_{[X/G]}
\simeq
\omega_{[U/G]}|_{[X/G]}
\otimes
\cO_{[X/G]} ( [X/G] ).
\end{equation}
Under the equivalence
\eqref{eq:coherent_sheaves_as_equivariant_sheaves},
it is translated into
\begin{equation}
\begin{split}
\pi^* \omega_{[X/G]}
\simeq
\omega_X
\simeq
\omega_U|_X
\otimes
\cO_X (X)
\simeq
\lb
\cO_X \cdot dx \wedge dy \wedge dz \wedge da \wedge db
\rb
\otimes
\lb
\cO_X \cdot f
\rb^{\vee}
\\
\simeq
\cO_X (-3, -15) \otimes \cO_X (3 , 6)
\simeq
\cO_X (0 , -9).
\end{split}
\end{equation}
Therefore we see
\begin{equation}
H^0 ( [X/G] , \wedge^2 \Theta_{[X/G]})
\simeq
\Hom_X^G (\cO_X , \cO_X (0 , 9))
\simeq
\lb
\bfk [x,y,z,a,b]/(f)
\rb_{(0,9)}
=
0.
\end{equation}
The 2nd isomorphism follows from the normality of the hypersurface defined by
\(
   f
\)
and that the codimension of the complement of
\(
   X
\)
in the hypersurface is
\(
   2
\).
The last equality is due to the
absence of monomials of bidegree
$
(0,9)
$.
\end{proof}

\subsection{The vanishing of contributions from the twisted sectors}
\label{section:The_vanishing_of_contributions_from_the_twisted_sectors}
We first discuss the vanishing of the contributions from the twisted sectors
$
 W \subset I \scMbar _{ g, n }
$
of
$
 c _{ W } = 1
$.

\begin{proposition}\label{proposition:vanishing_of_contributions_from_twisted_sectors_of_codim_1}
Let
$
 W \subset I \scMbar _{ g, n }
$
be the twisted sector with
$
 c _{ W } = 1
$
of common type as in \cref{summary:classification_of_twisted_sectors_of_codim_2}. 
Assume that $(g,n) \neq (1,2) $. 
Then the following cohomology vanishings hold.
\begin{enumerate}[(i)]
\item\label{it:H1}
 $H^1(W, \cN_{W/ \scMbar_{g,n}})=0$. 

\item\label{it:H0}
 $H^0(W, \Theta_{W} \otimes \cN_{W/ \scMbar_{g,n}})=0$. 
\end{enumerate}
\end{proposition}

\begin{proof}
Let
$
 \rho \colon \scMbar_{1,1} \times \scMbar_{g-1, n+1} \to W
$
be the finite \'{e}tale cover as in \cref{lemma:inductive_structure_of_the_boundary}.
Set
$
 X_1\coloneqq \scMbar_{1,1}, X_2\coloneqq \scMbar_{g-1,n+1}
$, and let
$
 \pr_i \colon X_1 \times X_2 \to X_i
$
be the $i$-th projection for $i = 1, 2$.

\noindent\eqref{it:H1} It is enough to show
$
 H^1(X_1 \times X_2, \rho^* \cN_{W/ \scMbar_{g,n}} ) =0
$.
Since we have 
$
 \rho^* \cN_{W/ \scMbar_{g,n}}
 \simeq
 \pr_1^* \psi_1^{\vee} \otimes \pr_2^* \psi_{n+1}^{\vee}
$ by \cref{lemma:inductive_structure_of_the_boundary},
we obtain
\begin{align}
\begin{aligned}
H^1(X_1 \times X_2, \rho^* \cN_{W/ \scMbar_{g,n}} )
\simeq H^0(X_1, \psi_1^{\vee}) \otimes H^1(X_2, \psi_{n+1}^{\vee})\\
\oplus  H^1(X_1, \psi_1^{\vee}) \otimes H^0(X_2, \psi_{n+1}^{\vee}).
\end{aligned}
\end{align}
This is zero since $\psi_1, \psi_{n+1}$ are nef and big by \cref{lemma:psi_classes_are_nef_and_big} 
and $\dim \scMbar_{g-1, n+1} >0$ by $(g,n) \neq (1,2)$ (cf. \cite[Theorem A.1]{MR2460695}). 

\noindent\eqref{it:H0} It is enough to show that
$
 H^0 ( X_1 \times X_2, \Theta_{X_1 \times X_2} \otimes
 \rho ^* \cN_{W/ \scMbar_{g,n}})=0
$.
This cohomology group is isomorphic to
$
 H^0 ( X_1 \times X_2, (\pr_1^* \Theta_{X_1} \oplus \pr_2^* \Theta_{X_2} )
 \otimes (\pr_1^* \psi_1^{\vee} \otimes \pr_2^* \psi_{n+1}^{\vee}))
$,
which is zero since we have
$
 H^0(X_1, \psi_1^{\vee}) = 0
$
and
$
 H^0( X_2, \psi_{n+1}^{\vee} ) = 0
$
by \cref{lemma:psi_classes_are_nef_and_big} and $\dim \scMbar_{g-1, n+1} >0$ by $(g,n) \neq (1,2)$.
\end{proof}

Next we compute the cohomology group
$
 H^0 ( W, \wedge^2 \cN _{ W / \scMbar _{ g, n } } )
$
for the twisted sectors
$
 W \subset I \scMbar _{ g, n }
$
of
$
 c _{ W } = 2
$
such that
$
 F ( W ) \subset \cB \subset \scMbar _{ g, n }
$
as follows. 

\begin{proposition}\label{proposition:vanishing for codim 2 sectors}
Let
$
 W \subset I \scMbar _{ g, n }
$
be a twisted sector of
$
 c _{ W } = 2
$
of type \eqref{it:classification_of_twisted_sectors_of_codim_2_HE},
\eqref{it:classification_of_twisted_sectors_of_codim_2_Elliptic},
\eqref{it:classification_of_twisted_sectors_of_codim_2_CM}, or
\eqref{it:classification_of_twisted_sectors_of_codim_2_3_components} in 
\cref{summary:classification_of_twisted_sectors_of_codim_2}.
Then
\begin{enumerate}[(I)]
    \item 
    \(
        H ^{ 0 }
        \left(
            W _{ \mathrm{ I } },
            \wedge^2 \cN _{ W / \scMbar _{ g, n } }
        \right)
        =
        0
    \)
    if 
    \(
        ( g, n ) \neq ( 2, 2 )       
    \).
    \item 
    \(
        H ^{ 0 }
        \left(
            W _{ \mathrm{ II } },
            \wedge^2 \cN _{ W / \scMbar _{ g, n } }
        \right)
        =
        0
    \)
    if 
    \(
        ( g, n ) \neq ( 1, 3 )       
    \).
    \item 
    \(
        H ^{ 0 }
        \left(
            W _{ \mathrm{ III } },
            \wedge^2 \cN _{ W / \scMbar _{ g, n } }
        \right)
        =
        0
    \)
    if 
    \(
        ( g, n ) \neq ( 1, 2 )
    \).
    \item 
    \(
        H ^{ 0 }
        \left(
            W _{ \mathrm{ IV } },
            \wedge^2 \cN _{ W / \scMbar _{ g, n } }
        \right)
        =
        0
    \)
    for all 
    \( ( g, n ) \).
\end{enumerate}
\end{proposition}

\begin{proof}
\eqref{it:classification_of_twisted_sectors_of_codim_2_HE} Let $X_1\coloneqq \scMbar_{2,1}$, $X_2\coloneqq \scMbar_{g-2, n+1}$ and 
$W ^{ 1 } \subset X_1$ be the Weierstrass divisor.
Let
$
 w \colon \Wtilde \coloneqq W \times _{ \cB _{ 2 : \emptyset  } } \lb X _{ 1 } \times X _{ 2 } \rb \to W
$
be the base change morphism, which is easily seen to be finite and \'etale. Since 
$
 W ^{ 1 } \times X _{ 2 }
$
is smooth, the canonically induced morphism
$
 \Wtilde \to W ^{ 1 } \times X _{ 2 }
$
is an isomorphism. Hence we have an isomorphism 
\begin{align}
\begin{aligned}
 w ^{ * } \wedge^2 \cN _{ W / \scMbar _{ g, n } }
 \simeq
 \wedge^2 \cN _{ \Wtilde / \scMbar _{ g, n } }
 \simeq \cN_{ W ^{ 1 } \times X _{ 2 } / X _{ 1 } \times X _{ 2 } }
 \otimes \cN_{ X_1 \times X_2 / \scMbar _{ g, n } }\\ 
 \simeq \pr_1 ^{ * } \cN_{ W ^{ 1 } / X_1} \otimes \lb \psi_1^{\vee} \boxtimes \psi_{n+1}^{\vee} \rb\\
  \simeq ( \cN_{W ^{ 1 } / X_1} \otimes \psi_1^{\vee} | _{ W ^{ 1 } } ) \boxtimes \psi_{n+1}^{\vee},   
\end{aligned}
\end{align}
where
$
 \pr_1 \colon W ^{ 1 } \times X_2 \to W ^{ 1 }
$
is the first projection. Thus, by the K\"unneth formula and the fact that $\psi_{n+1}$ is nef and big, we obtain
$
 H^0 ( W, \wedge^2 \cN _{ W / X } ) =0
$ except when $(g,n) =(2,2)$.

\noindent\eqref{it:classification_of_twisted_sectors_of_codim_2_Elliptic} Let $W ^{ 2 } \subset \scMbar_{1,2}$ be the divisor whose general point represents a pointed curve
$
 (C; p_1, p_2)
$
such that 
$
 \cO_C ( p_1 - p_2 )
$
is $2$-torsion. 
Then the twisted sector $W$ in this case admits a birational finite morphism onto the divisor
$
 W ^{ 2 } \times \scMbar_{g-1,n}
 \subset 
 \scMbar_{1,2} \times \scMbar_{g-1,n}
$. By this description, we can compute $ H^0 ( W, \wedge^2 \cN _{ W / X } ) =0$ as in \eqref{it:classification_of_twisted_sectors_of_codim_2_HE} except when $(g,n) = (1,3)$. 

\noindent\eqref{it:classification_of_twisted_sectors_of_codim_2_CM} Let $W ^{ 3 } \subset \scMbar_{1,1}$ be a divisor (point) corresponding to either
$
 E _{ 1728 }
$
or
$
 E _{ 0 }
$. 
Then, in this case, $W$ is a divisor $W ^{ 3 } \times \scMbar_{g-1,n+1} \subset 
\scMbar_{1,1} \times \scMbar_{g-1,n+1}$. Hence we can also show
$
 H^0 ( W, \wedge^2 \cN _{ W / X } ) = 0
$
as in \eqref{it:classification_of_twisted_sectors_of_codim_2_HE} except when
$
 ( g, n ) = ( 1, 2 )
$; one can also settle this case by noting that
$
 \cN _{ W ^{ 3 } / \scM _{ 1, 1 }  }
$
corresponds to the nontrivial characters of the stabilizer groups of the points corresponding to
$
 E _{ 1728 }
$
and
$
 E _{ 0 }
$.

\noindent\eqref{it:classification_of_twisted_sectors_of_codim_2_3_components}
Let
$
 W ^{ 4 }
$
be the normalization of the divisor
$
 \cB _{ 1: 1 } \subset \scMbar_{g-1, n+1}
$, which hence admits an isomorphism
$
 \scMbar_{g-2, n+2} \times \scMbar_{1,1} \simeq W _{ 4 }
$.
With these notation, $W$ in this case is isomorphic to
$
 \scMbar_{1,1} \times W ^{ 4 } \lb \subset \scMbar_{1,1} \times \scMbar_{g-1, n+1} \rb
$.
Hence we can also check
$
 H^0 ( W, \wedge^2 \cN _{ W / X } ) =0
$
as in \eqref{it:classification_of_twisted_sectors_of_codim_2_HE} by using that $\psi_1$ is nef and big on $\scMbar_{1,1}$.  
\end{proof}

Combining the results obtained above, we get the following conclusion.
\begin{summary}
The second Hochschild cohomology of
$
 \Mbar _{ 0, n }
$
vanishes for all
$
 n \neq 5
$,
and it is of dimension 6 when $n = 5$. For
$
 g \ge 1
$,
the second Hochschild cohomology of
$
 \scMbar _{ g, n }
$
vanishes for all
$
 ( g, n )
$
except possibly for the following 8 cases:
\begin{align}\label{eq:the_list_of_possible_exceptions}
 ( g, n ) = ( 4, 0 ),
 ( 3, 1 ), ( 3, 0 ),
 ( 2, 2 ), ( 2, 1 ), ( 2 , 0 ),
 ( 1, 3 ), ( 1, 2 ).
\end{align}
\end{summary}

\begin{proof}
    We have already shown everything except the vanishing for the case
$
 ( g, n ) = ( 1, 1 )
$.
Since
$
 \dim \scMbar _{ 1, 1 } = 1
$,
there is no contribution from the untwisted sector by Hacking's rigidity.
There is only one twisted sector of codimension 0 which is isomorphic to the untwisted sector as we saw in~\cref{proposition:classification_of_twisted_sectors_of_codim_0}, which hence does not contribute either.
The twisted sectors of codimension 1 are classified in \cref{summary:classification_of_twisted_sectors_of_codim_1}. They are essentially the same as \eqref{it:classification_of_twisted_sectors_of_codim_2_CM} of \cref{summary:classification_of_twisted_sectors_of_codim_2}, and the proof of~\cref{proposition:vanishing for codim 2 sectors} for the case \eqref{it:classification_of_twisted_sectors_of_codim_2_CM}, especially the argument about the divisor
\(
   W ^{ 3 } \subset \scMbar_{1,1}
\),
literally works.
\end{proof}
\subsection{The remaining cases}
\label{section:The_remaining_cases}
We expect that the 8 exceptions in \eqref{eq:the_list_of_possible_exceptions} are merely technical, in the sense that the vanishing of the second Hochschild cohomology should be the case also in those cases. In order to show the vanishing, we need more negativity results for the normal bundles and the tangent bundles of particular loci in
$
 \scMbar _{ g, n }
$
for lower
$
 ( g,n )
$.

Below we summarize what we have understood in so far, by describing
$
 \HH ^{ 2 } \lb \scMbar _{ g, n } \rb
$
as a direct sum of sheaf cohomology groups (which we expect to vanish) for
$
 ( g, n )
$
as in \eqref{eq:the_list_of_possible_exceptions}.

\begin{corollary}\label{corollary:descriptions_of_the_remaining_cases}
We have the following isomorphisms for the second Hochschild cohomology groups of
$
 \scMbar _{ g, n } 
$
for the 8 exceptional
$
 ( g, n )
$
of \eqref{eq:the_list_of_possible_exceptions}. We use notation for the twisted sectors as in \cref{summary:classification_of_twisted_sectors_of_codim_2} and \cref{summary:classification_of_twisted_sectors_of_codim_2}.

\begin{enumerate}
\item\label{it:description_of_hh2_4_0}
$
 \HH ^{ 2 } \lb \scMbar _{ 4, 0 } \rb
 \simeq
 H ^{ 0 } \lb W _{ \mathrm{HE} }, \wedge ^{ 2 } \cN _{ W _{ \mathrm{HE} } / \scMbar _{ 4, 0 } } \rb
$.

\item\label{it:description_of_hh2_3_1}
$
 \HH ^{ 2 } \lb \scMbar _{ 3, 1 } \rb
 \simeq
 H ^{ 0 } \lb W _{ \mathrm{HE} }, \wedge ^{ 2 } \cN _{ W _{ \mathrm{HE} } / \scMbar _{ 3, 1 } } \rb
$.

\item\label{it:description_of_hh2_3_0}
$
 \HH ^{ 2} \lb \scMbar _{ 3, 0 } \rb
 \simeq
 H ^{ 1 } \lb W _{ \mathrm{HE} }, \cN _{ 
    W _{ \mathrm{HE} } / \scMbar _{ 3, 0 } } \rb
    \\
 \oplus
 H ^{ 0 } \lb W _{ \mathrm{HE} },
 \Theta _{ W _{ \mathrm{HE} } } \otimes \cN _{ 
    W _{ \mathrm{HE} } / \scMbar _{ 3, 0 } } \rb
 \oplus
 H ^{ 0 } \lb W _{ \mathrm{BE} }, \wedge ^{ 2 } \cN _{ W _{ \mathrm{BE} } / \scMbar _{ 3, 0 } } \rb
$.

\item\label{it:description_of_hh2_2_2}
$
 \HH ^{ 2 } \lb \scMbar _{ 2, 2 } \rb
 \simeq
 H ^{ 0 } \lb W _{ \mathrm{I} }, \wedge ^{ 2 } \cN _{ W _{ \mathrm{I} } / \scMbar _{ 2, 2 } } \rb
 \oplus
 H ^{ 0 } \lb W _{ \mathrm{HE} }, \wedge ^{ 2 } \cN _{ W _{ \mathrm{HE} } / \scMbar _{ 2, 2 } } \rb
$.

\item\label{it:description_of_hh2_2_1}
\(
 \HH ^{ 2 } \lb \scMbar _{ 2, 1 } \rb
 \simeq
 H ^{ 1 } \lb W _{ \mathrm{HE} }, \cN _{ W _{ \mathrm{HE} } / \scMbar _{ 2, 1 } } \rb\\
 \oplus
 H ^{ 0 } \lb W _{ \mathrm{HE} }, \Theta _{ W _{ \mathrm{HE} } } \otimes \cN _{ W _{ \mathrm{HE} } / \scMbar _{ 2, 1 } } \rb
 \oplus
 H ^{ 0 } \lb W _{ \mathrm{BE} }, \wedge ^{ 2 } \cN _{ W _{ \mathrm{BE} } / \scMbar _{ 2, 1 } } \rb.   
\)

\item\label{it:description_of_hh2_2_0}
\(
 \HH ^{ 2 } \lb \scMbar _{ 2, 0 } \rb
 \simeq
  H ^{ 1 } \lb W _{ \mathrm{BE} }, \cN _{ W _{ \mathrm{BE} } / \scMbar _{ 2, 0 } } \rb\\
 \oplus
 H ^{ 0 } \lb W _{ \mathrm{BE} }, \Theta _{ W _{ \mathrm{BE} } } \otimes \cN _{ W _{ \mathrm{BE} } / \scMbar _{ 2, 0 } } \rb
 \oplus
 H ^{ 0 } \lb W _{ 3 }, \wedge ^{ 2 } \cN _{ W _{ 3 } / \scMbar _{ 2, 0 } } \rb
 \oplus\\
 H ^{ 0 } \lb W _{ 4 }, \wedge ^{ 2 } \cN _{ W _{ 4 } / \scMbar _{ 2, 0 } } \rb
 \oplus
 H ^{ 0 } \lb W _{ 6 }, \wedge ^{ 2 } \cN _{ W _{ 6 } / \scMbar _{ 2, 0 } } \rb
 \oplus
 H ^{ 0 } \lb W _{ \mathrm{CM} }, \wedge ^{ 2 } \cN _{ W _{ \mathrm{CM} } / \scMbar _{ 2, 0 } } \rb.   
\)

\item\label{it:description_of_hh2_1_3}
\(
 \HH ^{ 2 } \lb \scMbar _{ 1, 3 } \rb
 \simeq
 H ^{ 0 } \lb W _{ \mathrm{II} }, \wedge ^{ 2 } \cN _{ W _{ \mathrm{II} } / \scMbar _{ 1, 3 } } \rb
 \oplus
 H ^{ 0 } \lb W _{ \mathrm{NE} }, \wedge ^{ 2 } \cN _{ W _{ \mathrm{NE} } / \scMbar _{ 1, 3 } } \rb.   
\)

\item\label{it:description_of_hh2_1_2}
Let
$
 W \subset I \scMbar _{ 1, 2 }
$
be the twisted sector of
$
 c _{ W} = 1
$
as in \cref{proposition:study_of_W_of_codim_1}. Then
\(
\HH ^{ 2 } \lb \scMbar _{ 1, 2 } \rb
 \simeq
 H ^{ 1 } \lb W, \cN _{ W / \scMbar _{ 1, 2 } } \rb
 \oplus
 H ^{ 0 } \lb W, \Theta _{ W } \otimes \cN _{ W / \scMbar _{ 1, 2 } } \rb
 \oplus 
  H ^{ 1 } \lb W _{ \mathrm{NE} }, \cN _{ W _{ \mathrm{NE} } / \scMbar _{ 1, 2 } } \rb
 \oplus
 H ^{ 0 } \lb W _{ \mathrm{NE} }, \Theta _{ W _{ \mathrm{NE} } } \otimes \cN _{ W _{ \mathrm{NE} } / \scMbar _{ 1, 2 } } \rb
 \oplus
 H ^{ 0 } \lb W _{ \mathrm{CM} }, \wedge ^{ 2 } \cN _{ W _{ \mathrm{CM} } / \scMbar _{ 1, 2 } } \rb
 \oplus
 H ^{ 0 } \lb W _{ \mathrm{III} }, \wedge ^{ 2 } \cN _{ W _{ \mathrm{III} } / \scMbar _{ 1, 2 } } \rb   
\).
\end{enumerate}
\end{corollary}

\bibliographystyle{amsalpha}
\bibliography{mainbibs}

\def\cprime{$'$} \def\cprime{$'$}
\providecommand{\bysame}{\leavevmode\hbox to3em{\hrulefill}\thinspace}
\providecommand{\MR}{\relax\ifhmode\unskip\space\fi MR }
\providecommand{\MRhref}[2]{%
  \href{http://www.ams.org/mathscinet-getitem?mr=#1}{#2}
}
\providecommand{\href}[2]{#2}
\begin{thebibliography}{LVdB06}

\bibitem[AC98]{MR1733327}
Enrico Arbarello and Maurizio Cornalba, \emph{Calculating cohomology groups of moduli spaces of curves via algebraic geometry}, Inst. Hautes {\'E}tudes Sci. Publ. Math. (1998), no.~88, 97--127 (1999). \MR{1733327 (2001h:14030)}

\bibitem[ACG11]{MR2807457}
Enrico Arbarello, Maurizio Cornalba, and Pillip~A. Griffiths, \emph{Geometry of algebraic curves. {V}olume {II}}, Grundlehren der Mathematischen Wissenschaften [Fundamental Principles of Mathematical Sciences], vol. 268, Springer, Heidelberg, 2011, With a contribution by Joseph Daniel Harris. \MR{2807457 (2012e:14059)}

\bibitem[AGV08]{Abramovich-Graber-Vistoli}
Dan Abramovich, Tom Graber, and Angelo Vistoli, \emph{Gromov-{W}itten theory of {D}eligne-{M}umford stacks}, Amer. J. Math. \textbf{130} (2008), no.~5, 1337--1398. \MR{2450211 (2009k:14108)}

\bibitem[AV02]{MR1862797}
Dan Abramovich and Angelo Vistoli, \emph{Compactifying the space of stable maps}, J. Amer. Math. Soc. \textbf{15} (2002), no.~1, 27--75 (electronic). \MR{1862797 (2002i:14030)}

\bibitem[Beh04]{MR2172499}
K.~Behrend, \emph{Cohomology of stacks}, Intersection theory and moduli, ICTP Lect. Notes, XIX, Abdus Salam Int. Cent. Theoret. Phys., Trieste, 2004, pp.~249--294 (electronic). \MR{2172499 (2006i:14016)}

\bibitem[Bri98]{MR1451789}
Michel Brion, \emph{Differential forms on quotients by reductive group actions}, Proc. Amer. Math. Soc. \textbf{126} (1998), no.~9, 2535--2539. \MR{1451789 (98k:14067)}

\bibitem[BvdB03]{Bondal-van_den_Bergh}
A.~Bondal and M.~van~den Bergh, \emph{Generators and representability of functors in commutative and noncommutative geometry}, Mosc. Math. J. \textbf{3} (2003), no.~1, 1--36, 258. \MR{1996800 (2004h:18009)}

\bibitem[{Del}71]{zbMATH03348292}
Pierre {Deligne}, \emph{{Th\'eorie de Hodge. II. (Hodge theory. II).}}, {Publ. Math., Inst. Hautes \'Etud. Sci.} \textbf{40} (1971), 5--57 (French).

\bibitem[DI87]{MR894379}
Pierre Deligne and Luc Illusie, \emph{Rel{\`e}vements modulo {$p^2$} et d{\'e}composition du complexe de de {R}ham}, Invent. Math. \textbf{89} (1987), no.~2, 247--270. \MR{894379 (88j:14029)}

\bibitem[DM69]{MR0262240}
P.~Deligne and D.~Mumford, \emph{The irreducibility of the space of curves of given genus}, Inst. Hautes {\'E}tudes Sci. Publ. Math. (1969), no.~36, 75--109. \MR{0262240}

\bibitem[EV92]{MR1193913}
H{{\'e}}l{{\`e}}ne Esnault and Eckart Viehweg, \emph{Lectures on vanishing theorems}, DMV Seminar, vol.~20, Birkh{\"a}user Verlag, Basel, 1992. \MR{1193913 (94a:14017)}

\bibitem[Fed11]{MR2831833}
Maksym Fedorchuk, \emph{Moduli of weighted pointed stable curves and log canonical models of {$\overline{\scM}_{g,n}$}}, Math. Res. Lett. \textbf{18} (2011), no.~4, 663--675. \MR{2831833}

\bibitem[FM17]{MR3658203}
Barbara Fantechi and Alex Massarenti, \emph{On the rigidity of moduli of curves in arbitrary characteristic}, Int. Math. Res. Not. IMRN (2017), no.~8, 2431--2463. \MR{3658203}

\bibitem[FPSS25]{2025arXiv250900501F}
Lie {Fu}, Mauro {Porta}, Sarah {Scherotzke}, and Nicol{\`o} {Sibilla}, \emph{{Hochschild-Kostant-Rosenberg isomorphism for derived Deligne-Mumford stacks}}, arXiv e-prints (2025), arXiv:2509.00501, v3.

\bibitem[Fuj09]{Fujino:2009mz}
Osamu Fujino, \emph{Introduction to the log minimal model program for log canonical pairs, version 6.01}, arXiv:0907.1506, 07 2009.

\bibitem[Hac08]{MR2460695}
Paul Hacking, \emph{The moduli space of curves is rigid}, Algebra Number Theory \textbf{2} (2008), no.~7, 809--818. \MR{2460695 (2009j:14032)}

\bibitem[HNR14]{2014arXiv1405.1888H}
Jack {Hall}, Amnon {Neeman}, and David {Rydh}, \emph{{One positive and two negative results for derived categories of algebraic stacks}}, arXiv e-prints (2014), arXiv:1405.1888.

\bibitem[HR17]{MR3705292}
Jack Hall and David Rydh, \emph{Perfect complexes on algebraic stacks}, Compos. Math. \textbf{153} (2017), no.~11, 2318--2367. \MR{3705292}

\bibitem[Kap]{kapranov}
Mikhail Kapranov, \emph{Deformations of moduli spaces (draft)}.

\bibitem[Kat70]{MR0291177}
Nicholas~M. Katz, \emph{Nilpotent connections and the monodromy theorem: {A}pplications of a result of {T}urrittin}, Inst. Hautes {\'E}tudes Sci. Publ. Math. (1970), no.~39, 175--232. \MR{0291177 (45 \#271)}

\bibitem[Kee99]{MR1680559}
Se{{\'a}}n Keel, \emph{Basepoint freeness for nef and big line bundles in positive characteristic}, Ann. of Math. (2) \textbf{149} (1999), no.~1, 253--286. \MR{1680559 (2000j:14011)}

\bibitem[Low08]{MR2474321}
Wendy Lowen, \emph{Hochschild cohomology, the characteristic morphism and derived deformations}, Compos. Math. \textbf{144} (2008), no.~6, 1557--1580. \MR{2474321}

\bibitem[LVdB05]{MR2183254}
Wendy Lowen and Michel Van~den Bergh, \emph{Hochschild cohomology of abelian categories and ringed spaces}, Adv. Math. \textbf{198} (2005), no.~1, 172--221. \MR{2183254 (2007d:18017)}

\bibitem[LVdB06]{MR2238922}
\bysame, \emph{Deformation theory of abelian categories}, Trans. Amer. Math. Soc. \textbf{358} (2006), no.~12, 5441--5483 (electronic). \MR{2238922 (2008b:18016)}

\bibitem[Mas14]{MR3174737}
Alex Massarenti, \emph{The automorphism group of {$\overline M_g,_n$}}, J. Lond. Math. Soc. (2) \textbf{89} (2014), no.~1, 131--150. \MR{3174737}

\bibitem[MS74]{MR0440554}
John~W. Milnor and James~D. Stasheff, \emph{Characteristic classes}, Princeton University Press, Princeton, N. J.; University of Tokyo Press, Tokyo, 1974, Annals of Mathematics Studies, No. 76. \MR{0440554 (55 \#13428)}

\bibitem[MS13]{MR3098789}
Yu.~I. Manin and M.~N. Smirnov, \emph{On the derived category of {$\overline M_{0,n}$}}, Izv. Ross. Akad. Nauk Ser. Mat. \textbf{77} (2013), no.~3, 93--108. \MR{3098789}

\bibitem[MS14]{MR3165021}
Yuri~I. Manin and Maxim Smirnov, \emph{Towards motivic quantum cohomology of {$\overline{M}_{0,S}$}}, Proc. Edinb. Math. Soc. (2) \textbf{57} (2014), no.~1, 201--230. \MR{3165021}

\bibitem[NS20]{MR4057490}
Cris Negron and Travis Schedler, \emph{The {H}ochschild cohomology ring of a global quotient orbifold}, Adv. Math. \textbf{364} (2020), 106978, 49, With appendices by Pieter Belmans, Pavel Etingof, Negron and Schedler. \MR{4057490}

\bibitem[OU20]{2020arXiv200707620O}
Shinnosuke {Okawa} and Kazushi {Ueda}, \emph{{AS-regular algebras from acyclic spherical helices}}, July 2020.

\bibitem[Pag09]{pagani2009chen}
Nicola Pagani, \emph{Chen--{R}uan cohomology of moduli of curves}, Ph.D. thesis, International School for Advanced Studies, 2009.

\bibitem[Pag12]{MR2871153}
\bysame, \emph{The {C}hen-{R}uan cohomology of moduli of curves of genus 2 with marked points}, Adv. Math. \textbf{229} (2012), no.~3, 1643--1687. \MR{2871153}

\bibitem[Pag13]{MR3137360}
\bysame, \emph{Chen-{R}uan cohomology of {$\scM_{1,n}$} and {$\overline{\scM}_{1,n}$}}, Ann. Inst. Fourier (Grenoble) \textbf{63} (2013), no.~4, 1469--1509. \MR{3137360}

\bibitem[PS08]{MR2393625}
Chris A.~M. Peters and Joseph H.~M. Steenbrink, \emph{Mixed {H}odge structures}, Ergebnisse der Mathematik und ihrer Grenzgebiete. 3. Folge. A Series of Modern Surveys in Mathematics [Results in Mathematics and Related Areas. 3rd Series. A Series of Modern Surveys in Mathematics], vol.~52, Springer-Verlag, Berlin, 2008. \MR{2393625 (2009c:14018)}

\bibitem[Sat12]{MR3060750}
Matthew Satriano, \emph{De {R}ham theory for tame stacks and schemes with linearly reductive singularities}, Ann. Inst. Fourier (Grenoble) \textbf{62} (2012), no.~6, 2013--2051 (2013). \MR{3060750}

\bibitem[{Sta}16]{stacks-project}
The {Stacks Project Authors}, \emph{{\itshape{Stacks Project}}}, \url{http://stacks.math.columbia.edu}, 2016.

\bibitem[To{\"e}07]{Toen_HTDGC}
Bertrand To{\"e}n, \emph{The homotopy theory of {$dg$}-categories and derived {M}orita theory}, Invent. Math. \textbf{167} (2007), no.~3, 615--667. \MR{2276263 (2008a:18006)}

\bibitem[VdB01]{MR1846352}
Michel Van~den Bergh, \emph{Blowing up of non-commutative smooth surfaces}, Mem. Amer. Math. Soc. \textbf{154} (2001), no.~734, x+140. \MR{1846352 (2002k:16057)}

\bibitem[VdB07]{MR2344349}
\bysame, \emph{On global deformation quantization in the algebraic case}, J. Algebra \textbf{315} (2007), no.~1, 326--395. \MR{2344349 (2008i:14018)}

\bibitem[Vis89]{MR1005008}
Angelo Vistoli, \emph{Intersection theory on algebraic stacks and on their moduli spaces}, Invent. Math. \textbf{97} (1989), no.~3, 613--670. \MR{1005008 (90k:14004)}

\end{thebibliography}

\end{document}